\documentclass[pdflatex,sn-mathphys-num]{sn-jnl}

\usepackage{graphicx}%
\usepackage{multirow}%
\usepackage{amsmath,amssymb,amsfonts}%
\usepackage{amsthm}%
\usepackage{mathrsfs}%
\usepackage[title]{appendix}%
\usepackage{xcolor}%
\usepackage{textcomp}%
\usepackage{manyfoot}%
\usepackage{booktabs}%
\usepackage{algorithm}%
\usepackage{algorithmicx}%
\usepackage{algpseudocode}%
\usepackage{listings}%
\theoremstyle{thmstyleone}%
\newtheorem{theorem}{Theorem}
\newtheorem{lemma}[theorem]{Lemma}%
\newtheorem{assumption}[theorem]{Assumption}%

\theoremstyle{thmstyletwo}%
\newtheorem{example}{Example}%
\newtheorem{remark}{Remark}%

\theoremstyle{thmstylethree}%
\newtheorem{definition}{Definition}%

\begin{document}

\title[Article Title]{An a Posteriori Error Estimator for $C^0$ IPG Approximation of Multiple and Clustered Eigenvalues of the Biharmonic Operator}

%%===========================================================

\author[1]{\fnm{Jianing} \sur{Guo}}\email{2111167@tongji.edu.cn}

\author*[1]{\fnm{Qigang} \sur{Liang}}\email{qigang$\_$liang@tongji.edu.cn}

\affil[1]{\orgdiv{School of Mathematical Sciences, Tongji University, Shanghai, 200092, China and Key Laboratory of Intelligent Computing and Applications (Tongji University), Ministry of Education, Shanghai, 200092, China}}

\abstract{In this paper, based on the $C^0$ interior penalty Galerkin ($C^0$IPG) discretization, we propose and analyze an a posteriori error estimator for eigenfunctions associated with multiple and clustered eigenvalues of the biharmonic operator. The proposed estimator may capture the local singularities of the target eigenfunctions efficiently and play an important role in adaptive procedures. We develop a
rigorous cluster-projection-based analysis and introduce an auxiliary theoretical
error estimator to connect the invariant subspace error with a computable residual
estimator. Most importantly, the resulting reliability and efficiency bounds
are robust with respect to the mesh size, the mesh level and the internal spectral gaps
within the target cluster. Numerical experiments support the theoretical
results and demonstrate the robustness of the proposed estimator.
}

\keywords{Biharmonic operator, a posteriori error estimates, adaptive finite element methods, multiple and clustered eigenvalues, $C^0$ interior penalty Galerkin method}

\maketitle

\section{Introduction}\label{sec1}
\par Fourth-order eigenvalue problems arise naturally in the mathematical modeling of thin-plate bending, elastic vibration, phase-field separation, and high-order diffusion problems. A typical model is the clamped biharmonic eigenvalue problem: find a pair $(\lambda,u)\in \mathbb{R}^+\times H^2_0(\Omega)$ with $u\neq 0$ such that
\begin{equation}\label{bi-Intro}
\int_\Omega D^2u:D^2v\,dx=\lambda\int_\Omega uv\,dx,\quad \forall v\in H^2_0(\Omega),
\end{equation}
where $\Omega\subset\mathbb{R}^2$ is a bounded polygonal domain. The finite element approximation of self-adjoint eigenvalue problems has been systematically studied for several decades, with fundamental theory established by Babu\v{s}ka and Osborn \cite{BabuskaOsborn1989, BabuskaOsborn1991} and further developments reviewed by Boffi \cite{Boffi2010}. However, the numerical discretization of fourth-order eigenvalue problems remains challenging, owing to the $C^1$-continuity requirement of the underlying variational formulation and the limited regularity of eigenfunctions on nonsmooth domains; see \cite{MR595625, MR775683}.

 \par For biharmonic boundary value problems, various finite element methods have
been developed, including conforming, nonconforming, mixed, and discontinuous
Galerkin; see \cite{Morley1968,CiarletRaviart1974,Ciarlet1978,Engel2002}. 
Subsequently, a posteriori error analysis has been carried out for several of these approaches, including Morley and related nonconforming finite elements \cite{MR2291934,hushi2009,cahu2013}, mixed methods \cite{gudi2011}, discontinuous Galerkin methods \cite{Geo2011,dong2021}, and conforming Kirchhoff plate elements \cite{gus2018}. The $C^0$IPG method has also been widely studied for biharmonic problems. It is based on the Lagrange finite elements and uses penalty terms to weakly enforce the continuity of normal derivatives across interelement boundaries. For lower-order elements, the $C^0$IPG method has a computational complexity comparable to that of classical nonconforming methods, while higher-order spaces are much easier to construct than conforming $C^1$ finite element spaces. Brenner and Sung \cite{BrennerSung2005} developed a systematic analysis of this method for fourth-order elliptic boundary value problems on polygonal domains, and a posteriori error estimates were established in \cite{Brenner2010, Brenner2012}.

\par The a posteriori analysis for elliptic eigenvalue problems has also been extensively developed, especially for multiple and clustered eigenvalues. For second-order elliptic eigenvalue problems, a substantial body of work has established a mature framework for simple, multiple, and clustered eigenvalues; see, for example, \cite{daixy, Gallistl2014, Gallistl2015, Maday2017, Maday2018, Maday2020, liu2022,Li2026,li2025}. Dai et al.~\cite{daixy} studied adaptive finite element approximations for multiple eigenvalues and established convergence rates and quasi-optimal complexity. Gallistl \cite{Gallistl2014, Gallistl2015} developed a systematic adaptive framework for eigenvalue clusters, together with corresponding a posteriori error estimates. Canc\`es et al.~\cite{Maday2017, Maday2018, Maday2020} established guaranteed and robust a posteriori bounds for eigenvalues and eigenvectors, covering both multiplicities and clusters. Fully computable bounds for eigenfunctions were subsequently derived by Liu and Vejchodsk\'y \cite{liu2022}, while Li et al.~\cite{li2025,Li2026} recently developed pointwise a posteriori error estimators for simple, multiple, and clustered eigenvalues. These contributions provide important tools for the a posteriori analysis of eigenspace approximations for biharmonic eigenvalue problems.

\par For biharmonic eigenvalue problems, nonconforming Morley finite element approximations, including adaptive methods and a posteriori error estimates, have been extensively studied \cite{MR3407244, MR3315683, MR3771503}. The $C^0$ interior penalty Galerkin approximation was analyzed in \cite{brennereig}, while a priori and a posteriori error estimates for mixed discontinuous Galerkin approximations were derived in \cite{WangXiongWuLuo2019}. In the $C^0$IPG setting, Li and Yang \cite{LiYang2018b} developed residual-based adaptive algorithms for simple eigenvalue of the biharmonic operator. However, multiple and clustered eigenvalues remain more challenging and arise frequently in practical applications. The design and analysis of the a posteriori error estimator for multiple and clustered eigenvalues have not yet been developed. We must emphasize that it is nontrivial to give a rigorous a posteriori analysis for eigenfunctions corresponding to interior multiple and clustered eigenvalues. First, the error must be measured at the eigenspace level through a suitable cluster projection, rather than through individual eigenfunctions. Second, the analysis must control the consistency errors caused by the weak continuity of normal derivatives across interelement boundaries. Finally, an auxiliary theoretical estimator must be constructed to bridge the invariant subspace error and the computable residual estimator while avoiding any dependence on the internal spectral gaps.

\par  In this paper, based on the $C^0$ interior penalty Galerkin discretization, we propose and analyze an a posteriori error estimator for eigenfunctions associated with multiple and clustered eigenvalues of the biharmonic operator. The analysis is based on the cluster
projection $\Lambda_h:=P_h\circ R_h,$ where $R_h$ denotes the quasi-Ritz projection and $P_h$ is the $L^2$-orthogonal projection onto the discrete cluster space. Building on this projection, and with the aid of several analytical tools, including the properties of the enriching operator, an auxiliary theoretical error estimator, and a consistency decomposition adapted to the $C^0$IPG approximation, we derive the following reliability and efficiency estimates:
\[
\|u_i-\Lambda_hu_i\|_h
\le
C_{1}\eta_h,
\]
and
\[
C_2 \eta_h
\le
\sum_{i\in J}\|u_i-\Lambda_hu_i\|_h,
\]
where $i\in J:=\{m,m+1,\ldots,M\},$ $(\lambda_i,u_i)$ is the $i$-th eigenpair of \eqref{bi-Intro}, $\|\cdot\|_h$ is the mesh-dependent energy norm, and $C_{1}$ and $C_{2}$ are positive constants independent of the mesh size, the mesh level and the gaps among the target eigenvalues within the cluster. This property makes the estimator robust for tightly clustered eigenvalues. Finally, numerical results verify our theoretical findings. 

\par The rest of this paper is organized as follows. In Section 2, we introduce the notation and preliminary tools for the subsequent analysis. Section 3 develops the cluster projection framework and introduces the theoretical and computable error estimators. It also proves the main results on reliability, equivalence, and efficiency. We present numerical experiments in Section 4 and conclusions in Section 5.

\section{Preliminaries}\label{sec:setting}
\par We consider the clamped biharmonic eigenvalue problem
\begin{equation}\label{eq:strong-eig}
\begin{cases}
\Delta^2 u=\lambda u & \text{in }\Omega,\\
u=0,\quad \dfrac{\partial u}{\partial n}=0 & \text{on }\partial\Omega,
\end{cases}
\end{equation}
where $\Omega\subset\mathbb{R}^2$ is a bounded polygonal domain.
Its weak formulation is to find $(\lambda,u)\in\mathbb{R}^{+}\times H^2_0(\Omega)$ with $u\neq 0$ such that
\begin{equation}\label{eq:cont-eig}
a(u,v)=\lambda b(u,v)\qquad\forall v\ \in H^2_0(\Omega),
\end{equation}
where
\[
a(u,v):=\int_\Omega D^2u:D^2v\,dx,
\qquad
b(u,v):=\int_\Omega uv\,dx.
\]
Throughout the paper, eigenfunctions are normalized by $\|u\|_{L^2(\Omega)}^2 = b(u,u)=1$. The bilinear form $a(\cdot,\cdot)$ defines an inner product on $H^2_0(\Omega)$, and the induced norm is equivalent to the usual $H^2$-seminorm $|\cdot|_{H^2({\Omega})}$. 
Consequently, the eigenvalues of \eqref{eq:cont-eig} can be arranged as
\[
0<\lambda_1\le \lambda_2\le \cdots \le \lambda_j\le \cdots,
\]
where each eigenvalue is repeated according to its multiplicity and $\lambda_j\to +\infty$ as $j\to+\infty$.
Let $J:=\{m,m+1,\cdots,M\}$ be the index set of the target eigenvalues,
\[
\lambda_{m-1}<\lambda_m\le\cdots\le\lambda_M<\lambda_{M+1},
\]
with the convention $\lambda_0:=0$. The corresponding eigenspace is defined by $W:=\operatorname{span}\{u_j:j\in J\}.$

We now introduce the mesh notation and the finite element space used in the $C^0$IPG discretization. Let $\mathcal{T}_0$ be an initial quasi-uniform and
shape-regular triangulation of $\Omega$ with mesh size $h_0$, and let
$\mathcal{T}_h$ be a shape-regular triangulation obtained from
$\mathcal{T}_0$ after a finite number of adaptive refinements. Denote by
$\mathcal{E}_h$ the set of all edges of $\mathcal{T}_h$, with the decomposition
\[
\mathcal{E}_h=\mathcal{E}_h^I\cup\mathcal{E}_h^D,
\]
where $\mathcal{E}_h^I$ and $\mathcal{E}_h^D$ are the sets of interior and
boundary edges, respectively. For $T\in\mathcal{T}_h$ and
$e\in\mathcal{E}_h$, set $h_T:=\operatorname{diam}(T)$ and $h_e:=|e|.$ Let $V_h\subset H^1_0(\Omega)$ be the Lagrange finite element space of degree $k\ge 2$ associated with $\mathcal{T}_h$. For any piecewise $H^2$ function
$v$, the broken Hessian $D_h^2v$ is defined elementwise.

Next, we present the jump and average operators appearing in the $C^0$IPG
formulation and the residual estimators. For an interior edge
$e=\partial T^+\cap\partial T^-\in\mathcal{E}_h^I$, let $\boldsymbol n_e$
be the unit normal vector pointing from $T^-$ to $T^+$, and let
$\boldsymbol t_e$ be a unit tangent vector along $e$. We set
\[
\partial_{nn}v:=\boldsymbol n_e^T D^2v\,\boldsymbol n_e,
\qquad
\partial_{nt}v:=\boldsymbol n_e^T D^2v\,\boldsymbol t_e .
\]
The jumps and averages on $e$ are then defined by
\[
\begin{aligned}
\left[\!\left[\partial_n v\right]\!\right]
&:=
\nabla v|_{T^+}\cdot\boldsymbol n_e
-
\nabla v|_{T^-}\cdot\boldsymbol n_e,
&
\left\{\!\!\left\{\partial_{nn}v\right\}\!\!\right\}
&:=
\frac12
\left(
\partial_{nn}v|_{T^+}
+
\partial_{nn}v|_{T^-}
\right),
\\
\left[\!\left[\partial_{nn}v\right]\!\right]
&:=
\partial_{nn}v|_{T^+}
-
\partial_{nn}v|_{T^-},
&
\left[\!\left[\partial_n(\Delta v)\right]\!\right]
&:=
\nabla(\Delta v)|_{T^+}\cdot\boldsymbol n_e
-
\nabla(\Delta v)|_{T^-}\cdot\boldsymbol n_e.
\end{aligned}
\]
For a boundary edge $e\in\mathcal{E}_h^D$ with outward unit normal
$\boldsymbol n_e$, we set
\[
\left[\!\left[\partial_n v\right]\!\right]
:=
-\nabla v\cdot\boldsymbol n_e,
\qquad
\left\{\!\!\left\{\partial_{nn}v\right\}\!\!\right\}
:=
\partial_{nn}v.
\]

Following \cite{Brenner2012,brennereig}, the $C^0$ interior penalty bilinear form is given by
\begin{align*}
a_h(w_h,v_h)
:= {}&
\sum_{T\in\mathcal{T}_h}\int_T D^2w_h:D^2v_h\,dx
+\sum_{e\in\mathcal{E}_h}\int_e
\left\{\!\!\left\{\partial_{nn}w_h\right\}\!\!\right\}
\left[\!\left[\partial_n v_h\right]\!\right]\,ds
\notag\\
&+\sum_{e\in\mathcal{E}_h}\int_e
\left\{\!\!\left\{\partial_{nn}v_h\right\}\!\!\right\}
\left[\!\left[\partial_n w_h\right]\!\right]\,ds
+\sigma\sum_{e\in\mathcal{E}_h}h_e^{-1}\int_e
\left[\!\left[\partial_n w_h\right]\!\right]
\left[\!\left[\partial_n v_h\right]\!\right]\,ds,
\end{align*}
where $\sigma>0$ is the penalty parameter. For sufficiently large $\sigma$, the bilinear form $a_h(\cdot,\cdot)$ is symmetric and coercive on $V_h$. The associated mesh-dependent energy norm is defined by
\begin{equation*}
\|v_h\|_h^2
:=
\sum_{T\in\mathcal{T}_h}|v_h|_{H^2(T)}^2
+\sigma\sum_{e\in\mathcal{E}_h}h_e^{-1}
\left\|
\left[\!\left[\partial_n v_h\right]\!\right]
\right\|_{L^2(e)}^2.
\end{equation*}

\par With the bilinear form $a_h(\cdot,\cdot)$ defined above, the $C^0$IPG approximation of \eqref{eq:cont-eig} reads as follows: find $(\lambda_{h,j},u_{h,j})\in\mathbb{R}\times V_h$ with $u_{h,j}\neq 0$ such that
\begin{equation}\notag
a_h(u_{h,j},v_h)=\lambda_{h,j}b(u_{h,j},v_h)\qquad\forall v_h\in V_h,
\end{equation}
together with the normalization
\[
b(u_{h,j},u_{h,k})=\delta_{jk}.
\]
We denote by
\[
W_h:=\operatorname{span}\{u_{h,j}\mid j\in J\}
\]
the corresponding discrete eigenspace.

For the analysis of multiple and clustered eigenvalues, we use a spectral separation condition in the spirit of the cluster projection framework in \cite{Gallistl2014, Gallistl2015}. 

\begin{assumption}\label{ass:sep}
Assume that 
\begin{equation}\label{mhj}
M_{h,J}:=\max_{\substack{1\le j\le \dim V_h\\ j\notin J}}
\max_{i\in J}
\frac{\lambda_i}{|\lambda_{h,j}-\lambda_i|}<\infty.
\end{equation}
\end{assumption}

We next recall two auxiliary tools that will be used repeatedly in the a posteriori analysis. The first is the standard local interpolation estimate for the Lagrange nodal interpolation operator. Throughout the paper, $C$ (with or without subscripts) denotes a generic positive constant, independent of the mesh size, the mesh level and the gaps among the target eigenvalues within the
cluster, which may take different values at different occurrences.
\begin{lemma}[Interpolation estimate {\cite{Ciarlet1978,Brenner2010}}]\label{lem:interp}
Let $\Pi_h:C(\overline{\Omega})\to V_h$ denote the Lagrange nodal interpolation operator. For any $\zeta\in H^2(\omega_T)\cap C(\overline{\omega_T})$, there exists a constant $C>0$, independent of the mesh size, such that
\begin{equation}\notag
h_T^{-4}\|\zeta-\Pi_h\zeta\|_{L^2(T)}^2
+h_T^{-2}|\zeta-\Pi_h\zeta|_{H^1(T)}^2
+|\zeta-\Pi_h\zeta|_{H^2(T)}^2
\le C|\zeta|_{H^2(\omega_T)}^2,
\end{equation}
where $\omega_T$ denotes the element patch associated with $T$.
\end{lemma}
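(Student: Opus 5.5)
The plan is the standard Bramble--Hilbert argument on the reference element, followed by scaling to $T$ and a bound that passes from the element $T$ to the patch $\omega_T$.

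\emph{Step 1 (reference element).} Fix the reference triangle $\hat T$. Since $k\ge 2$, the local nodal interpolant $\hat\Pi$ reproduces all polynomials of degree at most $1$; in fact it reproduces $\mathbb{P}_k\supset\mathbb{P}_1$. By the Sobolev embedding $H^2(\hat T)\hookrightarrow C(\overline{\hat T})$ in two dimensions, $\hat\Pi$ is bounded from $H^2(\hat T)$ into $H^2(\hat T)$, because point values are controlled by $\|\cdot\|_{H^2(\hat T)}$. The operator $I-\hat\Pi$ therefore annihilates $\mathbb{P}_1$. The Bramble--Hilbert lemma then gives
\[
\|\hat\zeta-\hat\Pi\hat\zeta\|_{H^m(\hat T)}\le \hat C\,|\hat\zeta|_{H^2(\hat T)},\qquad m=0,1,2.
\]

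\emph{Step 2 (scaling).} Map $\hat T$ to $T$ by the affine map $F_T(\hat x)=B_T\hat x+b_T$. Nodal interpolation commutes with this pullback, i.e. $\widehat{\Pi_h\zeta}=\hat\Pi\hat\zeta$. The usual seminorm scaling gives
\[
|v|_{H^m(T)}\simeq \|B_T^{-1}\|^m\,|\det B_T|^{1/2}\,|\hat v|_{H^m(\hat T)}
\quad\text{and}\quad
|\hat v|_{H^2(\hat T)}\le C\|B_T\|^2|\det B_T|^{-1/2}|v|_{H^2(T)}.
\]
Shape regularity gives $\|B_T\|\le Ch_T$ and $\|B_T^{-1}\|\le C h_T^{-1}$. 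Combining these yields
\[
h_T^{-4}\|\zeta-\Pi_h\zeta\|^2_{L^2(T)}+h_T^{-2}|\zeta-\Pi_h\zeta|^2_{H^1(T)}+|\zeta-\Pi_h\zeta|^2_{H^2(T)}\le C|\zeta|^2_{H^2(T)}.
\]
Here $C$ depends only on $k$ and the shape-regularity constant of $\mathcal T_0$, which is preserved under the adaptive refinements. It is independent of $h$ and of the refinement level.

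\emph{Step 3.} The bound follows because $|\zeta|_{H^2(T)}\le|\zeta|_{H^2(\omega_T)}$, since $T\subset\omega_T$. The patch is only needed if one replaces $\Pi_h$ by a quasi-interpolant, which is presumably why it appears in the cited statement.

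The only delicate point is the $H^2\to C^0$ embedding. It is what makes the nodal interpolant well defined and bounded, and it holds because $d=2$. Everything else is routine.
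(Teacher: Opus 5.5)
Your proposal is correct and is the classical argument behind the result the paper cites without proof: the Bramble--Hilbert lemma on the reference element, using $H^2\hookrightarrow C^0$ in two dimensions, combined with affine scaling under shape regularity, as in Ciarlet. It gives the sharper element-local bound with $|\zeta|_{H^2(T)}$ on the right, which trivially implies the stated patch version; the only cosmetic point is that your scaling relation written with $\simeq$ holds in general only as a one-sided inequality, and becomes a two-sided equivalence once you invoke shape regularity.
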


The second tool is an enriching operator, which transfers discrete functions into the conforming space $H^2_0(\Omega)$ while controlling the nonconformity by the jump terms in the penalty norm.

\begin{lemma}[Enriching operator {\cite{Brenner2010,BrennerSung2005}}]\label{lem:enriching}
There exists an enriching operator $E_h:V_h\to H^2_0(\Omega)$ and a constant $C>0$, independent of the mesh size, such that for all $v_h\in V_h$,
\begin{align*}
    \sum_{T\in\mathcal{T}_h}
\left(
h_T^{-4}\|v_h-E_hv_h\|_{L^2(T)}^2
+h_T^{-2}|v_h-E_hv_h|_{H^1(T)}^2
\right)
&\le
C\sum_{e\in\mathcal{E}_h}h_e^{-1}
\left\|\left[\!\left[\partial_n v_h\right]\!\right]\right\|_{L^2(e)}^2,
\end{align*}
and
\begin{align*}
 \sum_{T\in\mathcal{T}_h}|v_h-E_hv_h|_{H^2(T)}^2
&\le
C\sum_{e\in\mathcal{E}_h}h_e^{-1}
\left\|\left[\!\left[\partial_n v_h\right]\!\right]\right\|_{L^2(e)}^2.
\end{align*}
\end{lemma}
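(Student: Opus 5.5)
The enriching operator will be built by averaging: we map $V_h$ into a conforming $C^1$ macro-element space $\widetilde V_h\subset H^2_0(\Omega)$ and then estimate the difference locally through the jumps of the normal derivative. We take $\widetilde V_h$ to be the Hsieh--Clough--Tocher space on $\mathcal T_h$ of degree matched to $k$ (for $k=2$, the cubic HCT element). Its degrees of freedom are function values and gradients at the vertices, normal derivatives at edge nodes, and, for higher degree, the remaining values that determine a $C^1$ function on each macro-triangle.

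We define $E_hv_h$ through these degrees of freedom.
\begin{itemize}
\item At each interior degree of freedom $p$, we take the arithmetic mean of the corresponding nodal functional of $v_h|_T$ over all $T$ sharing $p$.
\item All degrees of freedom on $\partial\Omega$ are set to zero, so that $E_hv_h\in H^2_0(\Omega)$.
\end{itemize}
Since $v_h$ is continuous, the function values agree automatically. Only derivative functionals can disagree between neighbouring elements, and the discrepancy is governed by the jump of $\nabla v_h$ across edges. Because $v_h$ is continuous, the tangential derivative is continuous, so this jump equals $[\![\partial_n v_h]\!]$. The boundary degrees of freedom are controlled in the same way, because $v_h=0$ on $\partial\Omega$ and the boundary jump is $-\partial_n v_h$.

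The estimate itself is local. On each $T$ we use the scaling bound
\[
\|v_h-E_hv_h\|_{L^2(T)}^2\le C\sum_{p\in T} h_T^{2+2|\alpha_p|}\,\bigl|N_p(v_h|_T)-N_p(E_hv_h)\bigr|^2 ,
\]
which comes from equivalence of norms on the finite-dimensional reference space together with an affine (or macro-affine) mapping; here $|\alpha_p|$ is the derivative order of the functional $N_p$. Each difference $N_p(v_h|_T)-N_p(E_hv_h)$ is bounded by a sum of differences $N_p(v_h|_{T'})-N_p(v_h|_{T''})$ over neighbouring pairs in the vertex or edge patch. Each such pair difference is the value at a point of $e$ of $[\![\partial_n v_h]\!]$, a polynomial on a shared edge $e$. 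An inverse estimate on $e$ then gives
\[
\bigl|[\![\partial_n v_h]\!](p)\bigr|^2\le C h_e^{-1}\|[\![\partial_n v_h]\!]\|_{L^2(e)}^2 .
\]
Multiplying by $h_T^{4}$ (with $|\alpha_p|=1$) gives the $L^2$ term with the weight $h_T^{-4}$ on the left. The $H^1$ and broken $H^2$ bounds follow in the same way, or directly from inverse estimates applied to the polynomial $v_h-E_hv_h$ on $T$ together with the $L^2$ bound.

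Summing over $T$ requires bounded overlap of the patches. This holds by shape regularity: the number of elements sharing a vertex is bounded, and $h_T\simeq h_e$ for neighbouring elements. Shape regularity is preserved under adaptive refinement, so the constants do not depend on $h$ or on the refinement level.

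The main obstacle is the case of second-order degrees of freedom, which occur in HCT spaces for $k\ge3$ if Hessian values at vertices are used. There the discrepancy involves jumps of second derivatives, which the right-hand side does not control directly. To avoid this, we will choose a $C^1$ element whose degrees of freedom involve only values and first derivatives; the HCT family admits such a choice. Otherwise we will show that these degrees of freedom can be fixed from $v_h$ on a single element, with the difference still controlled via edge inverse estimates of $[\![\partial_n v_h]\!]$ and its tangential derivative. As in the cited works, we expect to rely on \cite{BrennerSung2005} for the precise construction.
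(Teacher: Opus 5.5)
Your proposal is correct and is essentially the standard Brenner--Sung averaging construction that the paper cites without reproducing: degrees of freedom of a $C^1$ Clough--Tocher macro-element space (cubic HCT for $k=2$) are averaged, only value and first-derivative degrees of freedom are used, and the estimate follows from scaling, edge inverse estimates of $[\![\partial_n v_h]\!]$, and bounded patch overlap. Drop the fallback in your last paragraph, though. If Hessian values at vertices were degrees of freedom, the discrepancy would contain $[\![\partial_{nn}v_h]\!]$, which $[\![\partial_n v_h]\!]$ does not control; a function in $V_h\cap H^2_0(\Omega)$ that is not $C^2$ at some vertex makes the right-hand side vanish without lying in such an Argyris-type space, so the macro-element choice is essential rather than merely convenient.
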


\section{An a posteriori error estimator} 
\par In this section, we first state the main theorem, which establishes the reliability of the computable estimator for the approximation of the whole target eigenspace. The proof is divided into two parts. In Section 3.1, we derive the reliability and efficiency properties of the theoretical error estimator. In Section 3.2, we prove the equivalence between the theoretical estimator and the computable estimator. The proof of the main theorem is then completed by combining these results.

\par For each discrete eigenpair $(\lambda_{h,j},u_{h,j})$, we define the local computable indicators by
\begin{align*}
\eta_T(\lambda_{h,j},u_{h,j})&:=h_T^2\|\lambda_{h,j}u_{h,j}-\Delta^2u_{h,j}\|_{L^2(T)},\\
\eta_{e,1}(\lambda_{h,j},u_{h,j})&:=\sigma^{1/2} h_e^{-1/2}\left\|\left[\!\left[\partial_n u_{h,j}\right]\!\right]\right\|_{L^2(e)},\\
\eta_{e,2}(\lambda_{h,j},u_{h,j})&:=h_e^{1/2}\left\|\left[\!\left[\partial_{nn}u_{h,j}\right]\!\right]\right\|_{L^2(e)},\\
\eta_{e,3}(\lambda_{h,j},u_{h,j})&:=h_e^{3/2}\left\|\left[\!\left[\partial_n(\Delta u_{h,j})\right]\!\right]\right\|_{L^2(e)}.
\end{align*}
The global computable estimator is obtained by summing these local contributions over all discrete eigenfunctions in the cluster and over all elements, as follows:
\begin{equation}\label{compueta}
\eta_h^2:=\sum_{j\in J}\left(\sum_{T\in\mathcal{T}_h}\eta_T^2(\lambda_{h,j},u_{h,j})+\sum_{e\in\mathcal{E}_h}\eta_{e,1}^2(\lambda_{h,j},u_{h,j})+\sum_{\ell=2,3}\sum_{e\in\mathcal{E}_h^I}\left(\eta_{e,\ell}^2(\lambda_{h,j},u_{h,j})\right)\right).
\end{equation}

To relate this computable estimator to the eigenspace error, we introduce the following projection operators. Given $ f \in L^2(\Omega)$, let $w\in V:=H_0^2(\Omega)$ denote the unique solution to the linear problem
\[
a(w,v) = b(f,v)\qquad \forall v \in V.
\]
The quasi-Ritz projection $R_hw\in V_h$ is defined by
\[
a_h(R_h w,v_h)=b(f,v_h)
\qquad
\forall v_h\in V_h.
\]
Let $P_h:L^2(\Omega)\to W_h$ denote the $L^2$-orthogonal projection onto the discrete cluster space. The associated cluster projection is then given by
\[
\Lambda_h:=P_h\circ R_h.
\]
The corresponding theoretical error estimator will be defined in terms of $\Lambda_h$ and will serve as the main tool in the reliability analysis. We also introduce the solution operator
$T:L^2(\Omega)\to V:=H_0^2(\Omega)$ satisfying
\[
a(Tf,v)=b(f,v)
\qquad
\forall v\in V.
\]
The corresponding approximation parameter is defined by
\begin{equation}\label{eq:rhoh-def}
\rho_h
:=
\sup_{\substack{f\in L^2(\Omega)\\ \|f\|_{L^2(\Omega)}=1}}
\inf_{v_h\in V_h}
\|Tf-v_h\|_h.
\end{equation}
Then we are in a position to state the main result of this paper.

\begin{theorem}\label{thm:main-reliability}
Let $(\lambda_i,u_i)$ be an exact eigenpair belonging to the target cluster. Assume that the initial mesh size $h_0$ is sufficiently small. Then
\begin{align*}
\|u_{i}-\Lambda_hu_{i}\|_{h}
&\le
C\left(\left(\frac{\lambda_{M+1}}{\lambda_m}\right) \eta_h+\lambda_{i}(1+M_{h,J})\rho_h\|u_{i}-\Lambda_hu_{i}\|_{h}\right),\ \ \ i\in J:=\{m,m+1,...,M\},\\
     \eta_h
&\le C \sqrt{(2N+4N^2)}\sum_{i\in J}
\left(
1+\lambda_i(1+M_{h,J})\rho_h
\right)
\|u_i-\Lambda_hu_i\|_h,
\end{align*}
where $N := \operatorname{card}(J)$, $\|\cdot\|_{h}$ is the mesh-dependent energy norm, $\rho_h\ (\rho_h\to 0,$ as $h\to0)$ is the approximation parameter of the solution operator defined in \eqref{eq:rhoh-def} and $M_{h,J}$ is independent of the internal gaps among the target eigenvalues $\{\lambda_{i}\}_{i\in J}$ defined in \eqref{mhj}.
\end{theorem}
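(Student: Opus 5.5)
The proof goes through an intermediate, non-computable estimator built from the cluster projection. For each $i\in J$ set $\lambda_i\Lambda_hu_i$ as the discrete "load". I would define the theoretical estimator $\mu_h^2:=\sum_{i\in J}\mu_h^2(\lambda_i,\Lambda_hu_i)$, where $\mu_h(\lambda_i,\Lambda_hu_i)$ is the residual estimator of the source problem $\Delta^2 w=\lambda_i\Lambda_hu_i$ evaluated at the discrete function $\Lambda_hu_i\in W_h$. That is, it uses the element residual $h_T^2\|\lambda_i\Lambda_hu_i-\Delta^2\Lambda_hu_i\|_{L^2(T)}$ together with the three edge terms appearing in $\eta_h$. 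The proof then has two parts: (i) $\|u_i-\Lambda_hu_i\|_h$ is equivalent to $\mu_h$ up to higher-order perturbations; (ii) $\mu_h$ is equivalent to $\eta_h$ up to the same perturbations.

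For (i), write $u_i-\Lambda_hu_i=(u_i-R_hu_i)+(R_hu_i-P_hR_hu_i)$. Since $u_i=T(\lambda_iu_i)$, the first term is a quasi-Ritz error. For it I would use the standard $C^0$IPG source-problem a posteriori analysis of \cite{Brenner2010,Brenner2012}, with $E_h$ from Lemma~\ref{lem:enriching} and $\Pi_h$ from Lemma~\ref{lem:interp}. Concretely, test $a(u_i-E_h\Lambda_hu_i,\cdot)$ with $\phi-\Pi_h\phi$, integrate by parts elementwise, and bound the consistency terms $\{\!\{\partial_{nn}\}\!\}[\![\partial_n]\!]$ by the jump part of the norm. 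This gives
$\|u_i-\Lambda_hu_i\|_h\le C\mu_h(\lambda_i,\Lambda_hu_i)+C\lambda_i\|u_i-\Lambda_hu_i\|_{L^2}$.

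The key point is the $L^2$ term. I would bound it by $\lambda_i(1+M_{h,J})\rho_h\|u_i-\Lambda_hu_i\|_h$ using a gap-robust argument in the style of Gallistl. Expand $R_hu_i-P_hR_hu_i=\sum_{j\notin J}b(R_hu_i,u_{h,j})u_{h,j}$ and use the identity $(\lambda_{h,j}-\lambda_i)b(R_hu_i,u_{h,j})=\lambda_{h,j}b(R_hu_i-u_i,u_{h,j})$. This yields $\|(I-P_h)R_hu_i\|_{L^2}\le M_{h,J}\|u_i-R_hu_i\|_{L^2}$, a bound involving only the outer gap. I would then close with a duality (Aubin–Nitsche) estimate $\|u_i-R_hu_i\|_{L^2}\le C\rho_h\|u_i-R_hu_i\|_h$, obtained through $T$ and \eqref{eq:rhoh-def}. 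Efficiency of $\mu_h$ follows from Verfürth bubble-function arguments. For the element and $\partial_{nn}$, $\partial_n\Delta$ jump terms, one multiplies the residual by bubbles in $H^2_0(T)$ or $H^2_0(\omega_e)$ and tests against $u_i$. The $\partial_n$ jump term is trivially bounded, since $[\![\partial_nu_i]\!]=0$. Data oscillation vanishes because $\lambda_i\Lambda_hu_i$ is polynomial. Altogether this gives $\mu_h\le C\sum_i\|u_i-\Lambda_hu_i\|_h+\text{(the same }L^2\text{ term)}$.

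For (ii), both $\{\Lambda_hu_i\}_{i\in J}$ and $\{u_{h,j}\}_{j\in J}$ span $W_h$ (for $h_0$ small $\Lambda_h|_W$ is injective). Write $\Lambda_hu_i=\sum_{j\in J}\alpha_{ij}u_{h,j}$ with $\alpha_{ij}=b(R_hu_i,u_{h,j})=(\lambda_i/\lambda_{h,j})b(u_i,u_{h,j})$. Then the residual of $\Lambda_hu_i$ equals $\sum_j\alpha_{ij}(\lambda_iu_{h,j}-\Delta^2u_{h,j})$, which differs from $\sum_j\alpha_{ij}(\lambda_{h,j}u_{h,j}-\Delta^2u_{h,j})$ by terms of the form $(\lambda_i-\lambda_{h,j})$. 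The jump terms are linear, so they transfer exactly. Using $|\alpha_{ij}|\le\lambda_{M+1}/\lambda_m$ (bounding $\lambda_i/\lambda_{h,j}$ uniformly) and an inverse estimate on the perturbation, I obtain $\mu_h\le C(\lambda_{M+1}/\lambda_m)\eta_h+\text{h.o.t.}$. For the reverse direction, I use that the Gram-type matrix $(\alpha_{ij})$ is close to orthogonal once $h_0$ is small. This allows expressing $u_{h,j}$ through the $\Lambda_hu_i$ and controls the resulting cross terms by Cauchy–Schwarz. That is where the factor $\sqrt{2N+4N^2}$ comes from: $2N$ from the diagonal contributions and $4N^2$ from the cross terms.

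The main obstacle I expect is this step: keeping every constant independent of the inner gaps $\lambda_{h,j}-\lambda_i$ for $i,j\in J$. This applies both in the perturbation terms $(\lambda_i-\lambda_{h,j})u_{h,j}$, which I plan to bound via eigenvalue error $\lesssim$ energy error squared, and in the inversion of $(\alpha_{ij})$. I would first try to prove $\|I-\alpha^T\alpha\|\le C\rho_h$ using only $M_{h,J}$-type bounds, exploiting that $P_h$ is the exact $L^2$ projection onto $W_h$.
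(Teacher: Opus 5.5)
\textbf{There is a genuine gap in the step you flag as the main obstacle.}

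Your two-stage architecture is the paper's: a theoretical estimator, then transfer to $\eta_h$, with the $L^2$ term controlled Gallistl-style through $M_{h,J}$ and $\rho_h$. The failure is in how you handle the terms $(\lambda_i-\lambda_{h,j})$. For $i\neq j$ in $J$, $\lambda_i-\lambda_{h,j}$ is not an eigenvalue error. It tends to $\lambda_i-\lambda_j$, an inner gap of the cluster, and no ``eigenvalue error $\lesssim$ energy error squared'' bound applies to it.

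Consider bounding $\sum_{j\in J}\alpha_{ij}(\lambda_i-\lambda_{h,j})u_{h,j}$ term by term with $|\alpha_{ij}|\le\lambda_{M+1}/\lambda_m$. This leaves an additive contribution of size about $h^2\max_{j\in J}|\lambda_i-\lambda_{h,j}|$. That quantity is controlled neither by $\eta_h$ nor by $\|u_i-\Lambda_hu_i\|_h$. If you instead exploit smallness of the off-diagonal $\alpha_{ij}$, you bring back a factor $1/|\lambda_i-\lambda_j|$. That is exactly the inner-gap dependence the theorem excludes. The reverse direction has the same defect: writing $u_{h,j}=\sum_i(\alpha^{-1})_{ji}\Lambda_hu_i$ produces perturbations $(\lambda_{h,j}-\lambda_i)\Lambda_hu_i$.

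\textbf{The missing idea is an exact coefficient identity, not an eigenvalue estimate.} For every $j$, inside or outside $J$,
\[
\lambda_{h,j}\,b(R_hu_i,u_{h,j})=a_h(R_hu_i,u_{h,j})=\lambda_i\,b(u_i,u_{h,j}).
\]
Hence $\alpha_{ij}(\lambda_i-\lambda_{h,j})=\lambda_i\,b(R_hu_i-u_i,u_{h,j})$. The whole perturbation therefore collapses to the single function $\lambda_i(\Lambda_hu_i-P_hu_i)=\lambda_iP_h(R_hu_i-u_i)$. Its $L^2$ norm is at most $\lambda_i\|u_i-R_hu_i\|_{L^2(\Omega)}$, which is higher order. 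Incidentally, the identity you quote for $j\notin J$ should read $(\lambda_{h,j}-\lambda_i)b(R_hu_i,u_{h,j})=\lambda_i b(u_i-R_hu_i,u_{h,j})$; this is what produces $M_{h,J}$.

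\textbf{How the paper avoids the perturbation altogether.} It puts $\lambda_iP_hu_i$, not $\lambda_i\Lambda_hu_i$, into the theoretical element residual. Then, with your $\alpha_{ij}=b(R_hu_i,u_{h,j})$,
\[
\lambda_iP_hu_i-\Delta^2\Lambda_hu_i=\sum_{j\in J}\alpha_{ij}(\lambda_{h,j}u_{h,j}-\Delta^2u_{h,j})
\]
holds exactly. This choice has three further consequences.
\begin{enumerate}
\item The reverse direction is also exact, via $\lambda_{h,j}\beta_{ji}=\lambda_i\gamma_{ji}$. Here $\beta_{ji}$ and $\gamma_{ji}$ are the coefficients of $u_{h,j}$ in the bases $\{P_hu_i\}$ and $\{\Lambda_hu_i\}$.
\item $\Lambda_hu_i$ is the exact discrete solution, $a_h(\Lambda_hu_i,v_h)=\lambda_ib(P_hu_i,v_h)$. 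This is the Galerkin-orthogonality substitute your reliability step implicitly needs. With your load, testing with $\Pi_h\phi$ leaves an extra term $\lambda_ib(\Lambda_hu_i-P_hu_i,\Pi_h\phi)$, which is again the same small function.
\item The factor $\sqrt{2N+4N^2}$ does not come from a diagonal/cross-term split. It is $N$ (the sum over $j\in J$) times the coefficient bound $\sum_i\gamma_{ji}^2\le 2+4N$ from Gallistl's basis lemma.
\end{enumerate}
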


\begin{remark}\label{rem:constants-main}

For sufficiently small initial mesh size $h_0$ such that
\[
C\lambda_i(1+M_{h,J})\rho_h\le \frac12 \qquad \text{and}\qquad \max_{i\in J}
\left(1+\lambda_i(1+M_{h,J})\rho_h\right)
\le 2,
\]
 Theorem~\ref{thm:main-reliability} implies
\[
\|u_i-\Lambda_hu_i\|_h
\le C_1\eta_h,
\qquad i\in J,
\]
and
\[
C_2\eta_h
\le
\sum_{i\in J}\|u_i-\Lambda_hu_i\|_h,
\]
where
\[
C_1=2C\frac{\lambda_{M+1}}{\lambda_m},
\qquad
C_2=
\left(
2C\sqrt{2N+4N^2}
\right)^{-1}.
\]
Both $C_1$ and $C_2$ are positive constants independent of the mesh size and of the internal eigenvalue gaps in the target cluster.
\end{remark}

\subsection{Reliability and efficiency of the theoretical error estimator}\label{3.1}
In this subsection, we introduce the theoretical error estimator and establish its reliability and efficiency in the mesh-dependent energy norm $\|\cdot\|_h$. Although this estimator is not computable, it is defined in terms of the fixed exact eigenspace through the operators $P_h$ and $\Lambda_h$, and is well suited to the analysis of multiple and clustered eigenvalues. We begin by introducing two properties of the cluster projection operator $\Lambda_h$. The first one is a discrete identity that plays a key role in the residual representation.
\begin{lemma}\label{lambda_h}
Let $(\lambda_i,u_i)$ be an exact eigenpair for some $i\in J$. Then, for any $v_h\in V_h$, the following identity holds:
\[
a_h(\Lambda_hu_i,v_h)
=
\lambda_i b(P_hu_i,v_h).
\]
\end{lemma}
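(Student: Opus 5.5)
The plan is a direct computation in the discrete eigenbasis. We use three ingredients: the definition of the quasi-Ritz projection applied to $u_i=T(\lambda_iu_i)$, the symmetry of $a_h(\cdot,\cdot)$, and the fact that $P_h$ has an explicit expansion in the $b$-orthonormal basis $\{u_{h,j}\}_{j\in J}$ of $W_h$. No estimates are needed; the identity is purely algebraic.

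\emph{Step 1: the quasi-Ritz relation for $u_i$.} Since $a(u_i,v)=\lambda_i b(u_i,v)$ for all $v\in V$, we have $u_i=T(\lambda_i u_i)$. We therefore apply the definition of $R_h$ with $f=\lambda_iu_i$, which gives
\[
a_h(R_hu_i,v_h)=\lambda_i b(u_i,v_h)\qquad\forall v_h\in V_h .
\]

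\emph{Step 2: expand $P_h$ and move $a_h$ onto the eigenfunctions.} For $w\in L^2(\Omega)$ we write
\[
P_hw=\sum_{j\in J} b(w,u_{h,j})\,u_{h,j}.
\]
Taking $w=R_hu_i\in V_h$ and testing with an arbitrary $v_h\in V_h$ gives
\[
a_h(\Lambda_hu_i,v_h)=\sum_{j\in J} b(R_hu_i,u_{h,j})\,a_h(u_{h,j},v_h)=\sum_{j\in J} \lambda_{h,j}\,b(R_hu_i,u_{h,j})\,b(u_{h,j},v_h).
\]
Here the second equality uses the discrete eigenvalue equation.

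\emph{Step 3: replace the coefficient using symmetry.} By symmetry of $a_h$ and the discrete eigenvalue equation tested with $R_hu_i\in V_h$, we get
\[
\lambda_{h,j}\,b(R_hu_i,u_{h,j})=a_h(u_{h,j},R_hu_i)=a_h(R_hu_i,u_{h,j}).
\]
Step 1 then shows this equals $\lambda_i b(u_i,u_{h,j})$. Substituting into Step 2 yields
\[
a_h(\Lambda_hu_i,v_h)=\lambda_i\sum_{j\in J} b(u_i,u_{h,j})\,b(u_{h,j},v_h)=\lambda_i\, b(P_hu_i,v_h),
\]
which is the claimed identity.

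The only point requiring care is Step 3: the factor $\lambda_{h,j}$ must be traded for the exact $\lambda_i$. This uses symmetry of $a_h$, which holds for $\sigma$ large as stated, together with the fact that $R_hu_i$ lies in $V_h$ and is therefore an admissible test function in the discrete eigenproblem. No spectral gap or mesh assumption enters.
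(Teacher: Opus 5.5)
Your proposal is correct and follows essentially the same route as the paper's proof. Both expand $\Lambda_hu_i$ in the $b$-orthonormal basis $\{u_{h,j}\}_{j\in J}$, then use symmetry of $a_h$ and the definition of $R_h$ with $f=\lambda_iu_i$ to convert $\lambda_{h,j}\,b(R_hu_i,u_{h,j})$ into $\lambda_i b(u_i,u_{h,j})$. The only cosmetic difference is that you multiply by $\lambda_{h,j}$ where the paper divides by it, which spares you from noting $\lambda_{h,j}\neq 0$.
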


\begin{proof}
Since $\Lambda_h u_i\in W_h$, there exist coefficients $\{\alpha_m\}_{m\in J}$ such that
\[
\Lambda_h u_i=\sum_{m\in J}\alpha_m u_{h,m},
\qquad
\alpha_m=b(R_hu_i,u_{h,m}).
\]
By the symmetry of $a_h(\cdot,\cdot)$ and the definition of $R_h$, we obtain
\[
\alpha_m
=
b(R_hu_i,u_{h,m})
=
\lambda_{h,m}^{-1}a_h(R_hu_i,u_{h,m})=\lambda_{h,m}^{-1}
\lambda_i b(u_i,u_{h,m}).
\]
Consequently, 
\begin{align*}
a_h(\Lambda_hu_i,v_h)
&=
\sum_{m\in J}\alpha_m a_h(u_{h,m},v_h) =
\sum_{m\in J}\alpha_m\lambda_{h,m}b(u_{h,m},v_h) \\
&=
\lambda_i \sum_{m\in J}b(u_i,u_{h,m})b(u_{h,m},v_h) = \lambda_i b(P_hu_i,v_h),
\end{align*}
which completes the proof.
\end{proof}

The next lemma gives the $L^2$-control of the cluster projection error. It is a standard consequence of Assumption \ref{ass:sep} and the Aubin--Nitsche argument.
\begin{lemma}\label{lem:L2control}
For every $u_i\in W$ with $\|u_i\|_{L^2(\Omega)}^2=1$,
\begin{equation}\notag
\|\Lambda_hu_i-R_hu_i\|_{L^2(\Omega)}\le M_{h,J}\|u_i-R_hu_i\|_{L^2(\Omega)}
\end{equation}
and
\begin{equation}\notag
\|u_i-P_hu_i\|_{L^2(\Omega)}\le \|u_i-\Lambda_hu_i\|_{L^2(\Omega)}
\le (1+M_{h,J})\|u_i-R_hu_i\|_{L^2(\Omega)}.
\end{equation}
Moreover,
\begin{equation}\notag
\|u_i-P_h u_i\|_{L^2(\Omega)}\le C(1+M_{h,J})\rho_h\|u_i-\Lambda_h u_i\|_{h}.
\end{equation}
\end{lemma}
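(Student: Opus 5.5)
We need to prove Lemma \ref{lem:L2control}, and we will work in the discrete eigenbasis throughout. Expand $R_hu_i=\sum_{j=1}^{\dim V_h}\beta_j u_{h,j}$ with $\beta_j=b(R_hu_i,u_{h,j})$. As in the proof of Lemma \ref{lambda_h}, the symmetry of $a_h$ and the definition of $R_h$ give
\[
\beta_j=\lambda_{h,j}^{-1}a_h(R_hu_i,u_{h,j})=\lambda_{h,j}^{-1}\lambda_i\, b(u_i,u_{h,j}).
\]
Since $\Lambda_hu_i=\sum_{j\in J}\beta_ju_{h,j}$, we have $\Lambda_hu_i-R_hu_i=-\sum_{j\notin J}\beta_ju_{h,j}$.

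For $j\notin J$, the key algebraic identity is
\[
b(u_i-R_hu_i,u_{h,j})=b(u_i,u_{h,j})\Bigl(1-\frac{\lambda_i}{\lambda_{h,j}}\Bigr)=\frac{\lambda_{h,j}-\lambda_i}{\lambda_i}\,\beta_j .
\]
This gives
\[
|\beta_j|=\frac{\lambda_i}{|\lambda_{h,j}-\lambda_i|}\,|b(u_i-R_hu_i,u_{h,j})|\le M_{h,J}\,|b(u_i-R_hu_i,u_{h,j})| .
\]
Squaring, summing over $j\notin J$, and applying Bessel's inequality in the $b$-orthonormal basis yields the first estimate.

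For the second chain, the left inequality holds because $P_hu_i$ is the $L^2$-best approximation of $u_i$ in $W_h$ and $\Lambda_hu_i\in W_h$. The right inequality follows from the triangle inequality $\|u_i-\Lambda_hu_i\|\le\|u_i-R_hu_i\|+\|R_hu_i-\Lambda_hu_i\|$ combined with the first estimate.

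For the final bound, the plan is to show $\|u_i-R_hu_i\|_{L^2}\le C\rho_h\|u_i-R_hu_i\|_h$ by an Aubin--Nitsche duality argument. We then convert $\|u_i-R_hu_i\|_h$ into $\|u_i-\Lambda_hu_i\|_h$. In the duality step, set $e=u_i-R_hu_i$ and $z=Te$. Then
\[
\|e\|^2=b(e,z)=a(u_i,z)-a_h(R_hu_i,\Pi z)+(\text{consistency terms}),
\]
and we use Galerkin orthogonality $a_h(u_i-R_hu_i,v_h)=0$, which holds because $u_i$ is smooth enough on each element for $a_h(u_i,\cdot)$ to make sense. This is a known $C^0$IPG result, Brenner--Sung and \cite{brennereig}, which gives
\[
\|e\|\le C\rho_h\|e\|_h
\]
after choosing $v_h$ quasi-optimally for $z$ and invoking Lemma \ref{lem:enriching} to handle the nonconformity.

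\textbf{Main obstacle.} The hard step is replacing $\|u_i-R_hu_i\|_h$ by $\|u_i-\Lambda_hu_i\|_h$. The difference $\Lambda_hu_i-R_hu_i=-\sum_{j\notin J}\beta_ju_{h,j}$ lies in the $a_h$-orthogonal complement of $W_h$, and $u_i-R_hu_i$ is $a_h$-orthogonal to $V_h$. Therefore
\[
\|u_i-\Lambda_hu_i\|_h^2\gtrsim a_h(u_i-\Lambda_hu_i,u_i-\Lambda_hu_i)=a_h(u_i-R_hu_i,u_i-R_hu_i)+a_h(R_hu_i-\Lambda_hu_i,R_hu_i-\Lambda_hu_i)\ge c\|u_i-R_hu_i\|_h^2 ,
\]
using the coercivity of $a_h$ in the extended norm. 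Coercivity on $V_h+H^2_0$ is where the consistency issue appears. If it fails there, I would instead argue via the enriching operator, as in \cite{Brenner2010}, to obtain an equivalent norm on the sum space.

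Combining these steps gives
\[
\|u_i-P_hu_i\|\le(1+M_{h,J})\,C\rho_h\|u_i-\Lambda_hu_i\|_h ,
\]
which is the final estimate of Lemma \ref{lem:L2control}.
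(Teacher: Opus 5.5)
Your proof of the first two estimates is correct and is the argument of \cite{Gallistl2015} that the paper cites. The ingredients are the identity $\beta_j=\frac{\lambda_i}{\lambda_{h,j}-\lambda_i}\,b(u_i-R_hu_i,u_{h,j})$ for $j\notin J$, Bessel's inequality, best approximation of $P_h$, and the triangle inequality.

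The gap is in the third estimate, at the step you flagged. In the conforming setting one finishes with $\|u_i-R_hu_i\|_{L^2(\Omega)}\le\rho_h\|u_i-R_hu_i\|_a\le\rho_h\|u_i-\Lambda_hu_i\|_a$, where $\|\cdot\|_a$ is the energy norm and the second inequality is best approximation of the Galerkin projection; $C^0$IPG has no such property. Your replacement is the identity $a_h(w,w)=a_h(e,e)+a_h(d,d)$ with $w=u_i-\Lambda_hu_i$, $e=u_i-R_hu_i$, $d=R_hu_i-\Lambda_hu_i$. The identity itself is fine, given consistency. However, the two inequalities you wrap around it, $a_h(w,w)\lesssim\|w\|_h^2$ and $a_h(e,e)\gtrsim\|e\|_h^2$, do not hold on $V_h+H^2_0(\Omega)$ with respect to $\|\cdot\|_h$. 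The consistency term $2\sum_{e}\int_e\{\!\{\partial_{nn}w\}\!\}[\![\partial_nw]\!]\,ds$ contains $\{\!\{\partial_{nn}u_i\}\!\}$. The inverse estimate $h_e\|\{\!\{\partial_{nn}v\}\!\}\|_{L^2(e)}^2\lesssim\sum_{T\supset e}|v|_{H^2(T)}^2$, which makes $a_h$ bounded and coercive on $V_h$, fails for $u_i$. You pick up an extra term such as $\sum_e h_e\|\partial_{nn}(u_i-\Pi_hu_i)\|_{L^2(e)}^2$, which $\|u_i-\Lambda_hu_i\|_h$ does not control. ``Argue via the enriching operator'' names a direction but is not an argument. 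Your Aubin--Nitsche step hides the same term, $\int_e\{\!\{\partial_{nn}(u_i-R_hu_i)\}\!\}[\![\partial_n(z-v_h)]\!]\,ds$, so it too needs extra regularity of $u_i$ and $Te$ and is more than a citation; the paper also only asserts it.

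The missing step can be done entirely inside $V_h$, with no coercivity on the sum space. Since $d=\sum_{j\notin J}\beta_ju_{h,j}$ is both $b$- and $a_h$-orthogonal to $W_h$, the definition of $R_h$ gives
\[
a_h(d,d)=a_h(R_hu_i,d)-a_h(\Lambda_hu_i,d)=\lambda_i\,b(u_i,d)=\lambda_i\,b(u_i-P_hu_i,d).
\]
Coercivity of $a_h$ on $V_h$, together with $\|v_h\|_{L^2(\Omega)}\le C\|v_h\|_h$ on $V_h$ (via Lemma \ref{lem:enriching}), then gives
\[
\|d\|_h\le C\lambda_i\|u_i-P_hu_i\|_{L^2(\Omega)}\le C\lambda_i\|u_i-\Lambda_hu_i\|_{L^2(\Omega)}\le C\lambda_i\|u_i-\Lambda_hu_i\|_h .
\]
The last step follows by splitting $u_i-\Lambda_hu_i=(u_i-E_h\Lambda_hu_i)+(E_h\Lambda_hu_i-\Lambda_hu_i)$ as in the proof of Theorem \ref{rel}. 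Hence $\|u_i-R_hu_i\|_h\le(1+C\lambda_i)\|u_i-\Lambda_hu_i\|_h$. Chaining this with your first two estimates and the duality bound yields the third estimate, with a constant that depends on $\lambda_i$ but not on the mesh or the internal gaps. Alternatively, keep $C\lambda_i\|u_i-P_hu_i\|_{L^2(\Omega)}$ on the right and absorb it under the smallness condition of Remark \ref{rem:constants-main}.
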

\begin{proof}
    The proof follows the same argument as Lemma 2.1 in \cite{Gallistl2015}.
\end{proof}

We now define the theoretical, non-computable error estimator. It will serve as an auxiliary estimator in the reliability analysis.
\begin{definition}
For each $T\in\mathcal T_h$, $e\in\mathcal E_h$ and $i \in J$, define
\begin{align*}
\mu_T(\lambda_i,u_i) & :=h_T^2\|\lambda_i P_h u_i-\Delta^2(\Lambda_hu_i)\|_{L^2(T)},\\
\mu_{e,1}(\lambda_i,u_i)& :=\sigma^{1/2}|e|^{-1/2}\|[\![\partial_{n}\Lambda_hu_i]\!]\|_{L^2(e)},\\
\mu_{e,2}(\lambda_i,u_i)& :=|e|^{1/2}\|[\![\partial_{nn}\Lambda_hu_i]\!]\|_{L^2(e)},\\
\mu_{e,3}(\lambda_i,u_i)& :=|e|^{3/2}\|[\![\partial_{n}(\Delta\Lambda_hu_i)]\!]\|_{L^2(e)}.
\end{align*}
The global theoretical estimator is denoted by
\[
\mu_h^2(\mathcal T_h,\lambda_i,u_i)
:=
\sum_{T\in\mathcal T_h}\mu_T^2(\lambda_i,u_i)
+
\sum_{e\in\mathcal E_h}\mu_{e,1}^2(\lambda_i,u_i)
+
\sum_{\ell=2,3}\sum_{e\in\mathcal E_h^I}\mu_{e,\ell}^2(\lambda_i,u_i).
\]
\end{definition}

Next, we derive a reliability bound for the theoretical estimator.
\begin{theorem}\label{rel}
There exists a constant $C>0$, independent of the mesh size and of the internal gaps within the target cluster, such that
\[
\|u_i-\Lambda_hu_i\|_h
\le
C\big(
\mu_h(\mathcal T_h,\lambda_i,u_i)
+
\lambda_i(1+M_{h,J})\rho_h\|u_i-\Lambda_h u_i\|_{h}\big),\qquad i\in J.
\]
\end{theorem}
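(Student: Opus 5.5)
Set $e_h:=u_i-\Lambda_hu_i$ and split the error through the enriching operator of Lemma~\ref{lem:enriching}:
\[
\|e_h\|_h\le \|u_i-E_h\Lambda_hu_i\|_h+\|E_h\Lambda_hu_i-\Lambda_hu_i\|_h .
\]
Lemma~\ref{lem:enriching} controls the second term by the jumps of $\partial_n\Lambda_hu_i$, i.e.\ by $\big(\sum_e\mu_{e,1}^2\big)^{1/2}$. The first term is conforming, so $\|u_i-E_h\Lambda_hu_i\|_h=a(u_i-E_h\Lambda_hu_i,u_i-E_h\Lambda_hu_i)^{1/2}$. It therefore suffices to bound
\[
a(u_i-E_h\Lambda_hu_i,\phi)\qquad\text{for }\phi:=u_i-E_h\Lambda_hu_i\in H^2_0(\Omega)
\]
by $\|\phi\|_{H^2}$ times the claimed right-hand side.

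Write $a(u_i,\phi)=\lambda_i b(u_i,\phi)$ and insert $\Pi_h\phi$ and $\Lambda_hu_i$:
\[
a(u_i-E_h\Lambda_hu_i,\phi)=\lambda_i b(u_i-P_hu_i,\phi)+\big[\lambda_i b(P_hu_i,\phi)-a_h(\Lambda_hu_i,\phi)\big]+\big[a_h(\Lambda_hu_i,\phi)-a(E_h\Lambda_hu_i,\phi)\big].
\]
Here $a_h$ is extended to $V_h+H^2_0$; on $H^2_0$ the jumps of $\partial_n\phi$ vanish. The three terms are handled as follows.

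\textbf{First term.} By Lemma~\ref{lem:L2control} and the Poincaré-type bound $\|\phi\|_{L^2}\le C\|\phi\|_{H^2}$,
\[
\lambda_i b(u_i-P_hu_i,\phi)\le C\lambda_i(1+M_{h,J})\rho_h\|e_h\|_h\,\|\phi\|_{H^2}.
\]

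\textbf{Third term.} This is a consistency error. Since $\phi$ is conforming, only the term $\sum_T\int_T D^2(\Lambda_hu_i-E_h\Lambda_hu_i):D^2\phi$ and the edge term $\int_e\{\!\{\partial_{nn}\phi\}\!\}[\![\partial_n\Lambda_hu_i]\!]$ survive. The first is bounded by Lemma~\ref{lem:enriching} by $\mu_{e,1}$-terms. The edge term is more delicate, since $\partial_{nn}\phi$ has no edge trace control for a general $H^2$ function.

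\textbf{Second term.} Using Lemma~\ref{lambda_h} with $v_h=\Pi_h\phi$, it equals
\[
\lambda_i b(P_hu_i,\phi-\Pi_h\phi)-a_h(\Lambda_hu_i,\phi-\Pi_h\phi).
\]
Integrate by parts elementwise twice, writing $\psi=\phi-\Pi_h\phi$. This produces
\[
\int_T(\lambda_iP_hu_i-\Delta^2\Lambda_hu_i)\psi,\qquad \int_e[\![\partial_{nn}\Lambda_hu_i]\!]\,\partial_n\psi,\qquad \int_e[\![\partial_n\Delta\Lambda_hu_i]\!]\,\psi,
\]
together with tangential-derivative terms. These vanish, or telescope, because $\psi$ is continuous. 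There are also symmetrization and penalty terms containing $[\![\partial_n\psi]\!]=-[\![\partial_n\Pi_h\phi]\!]$. Each piece is then bounded by Cauchy--Schwarz, the trace inequality and Lemma~\ref{lem:interp}, giving $C\mu_h\|\phi\|_{H^2}$ (the residuals yield $\mu_T,\mu_{e,2},\mu_{e,3}$). The symmetric edge term $\int_e\{\!\{\partial_{nn}\psi\}\!\}[\![\partial_n\Lambda_hu_i]\!]$ and the penalty term need the discrete inverse/trace estimate on $\Pi_h\phi$ combined with the $\mu_{e,1}$ factor.

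\textbf{Main obstacle.} The hardest point is the edge terms involving $\{\!\{\partial_{nn}\phi\}\!\}$ for a merely $H^2$ function $\phi$. To avoid them, I would \emph{not} split $\phi$ as above but keep the full consistency term together: write $a(E_h\Lambda_hu_i,\phi)=a_h(E_h\Lambda_hu_i,\phi)$ and combine it with $a_h(\Lambda_hu_i,\Pi_h\phi)$ only through $\Pi_h\phi$. Equivalently, test with $\phi$ replaced by $\phi-\Pi_h\phi+\Pi_h\phi$ from the outset, so that $\partial_{nn}$ traces are only ever taken of discrete functions (inverse estimates apply) or of $\phi-\Pi_h\phi$ (trace inequality plus Lemma~\ref{lem:interp}).

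Collecting the bounds and dividing by $\|\phi\|_{H^2}=\|u_i-E_h\Lambda_hu_i\|_h$ gives the statement of Theorem~\ref{rel}, with constants independent of the internal gaps because only $M_{h,J}$ enters.
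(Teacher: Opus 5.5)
Your proposal follows the paper's proof in its architecture. It uses the same enriching-operator split $u_i-\Lambda_hu_i=(u_i-E_h\Lambda_hu_i)+(E_h\Lambda_hu_i-\Lambda_hu_i)$ and the same three-term identity (the paper's $I_1+I_2+I_3$) obtained from Lemma~\ref{lambda_h} tested with $\Pi_h\phi$. It also treats $I_1$ with Lemma~\ref{lem:L2control} and the rest with Lemmas~\ref{lem:interp} and~\ref{lem:enriching}, as the paper does.

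The one genuine difference is how you handle $\int_e\{\!\{\partial_{nn}\phi\}\!\}[\![\partial_n\Lambda_hu_i]\!]$, and there your version is the sounder one. The paper estimates $I_2$ and $I_3$ separately. It bounds $\int_e\{\!\{\partial_{nn}\eta\}\!\}[\![\partial_n\Lambda_hu_i]\!]$, with $\eta=\phi-\Pi_h\phi$, ``by the trace theorem and Lemma~\ref{lem:interp}'', and it bounds $I_3$ by a continuity estimate for $a_h$ in the $\|\cdot\|_h$-norm. Both steps tacitly need an $L^2(e)$ trace of $D^2\phi$ controlled by $|\phi|_{H^2(\Omega)}$, which a general $\phi\in H^2_0(\Omega)$ does not have. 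With your regrouping these two contributions cancel exactly, because $[\![\partial_nE_h\Lambda_hu_i]\!]=0$. What remains is $\int_e\{\!\{\partial_{nn}\Pi_h\phi\}\!\}[\![\partial_n\Lambda_hu_i]\!]$, which a discrete inverse estimate and Lemma~\ref{lem:interp} bound by $C\mu_{e,1}$ times a local $H^2$-seminorm of $\phi$. So your route costs nothing extra and closes a hole that the paper's more compact write-up leaves open.

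Three bookkeeping points in your sketch should be corrected.

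(i) In the elementwise integration by parts, $\partial_n\psi$ is not single-valued on interior edges, since $\Pi_h\phi$ is only $C^0$. The $\partial_{nn}$ boundary terms therefore give $-[\![\partial_{nn}\Lambda_hu_i]\!]\{\!\{\partial_n\psi\}\!\}-\{\!\{\partial_{nn}\Lambda_hu_i\}\!\}[\![\partial_n\psi]\!]$. The second piece must cancel against the symmetrization term $\int_e\{\!\{\partial_{nn}\Lambda_hu_i\}\!\}[\![\partial_n\psi]\!]$ of $a_h$. It cannot be bounded by Cauchy--Schwarz as you suggest, because $\{\!\{\partial_{nn}\Lambda_hu_i\}\!\}$ is not a residual and is not controlled by $\mu_h$.

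(ii) The tangential terms do not vanish by continuity of $\psi$. What survives is $-\int_e[\![\partial_{nt}\Lambda_hu_i]\!]\,\partial_t\psi$ on interior edges. Since $[\![\partial_{nt}\Lambda_hu_i]\!]=\partial_t[\![\partial_n\Lambda_hu_i]\!]$ is a polynomial on $e$, you can bound it using an inverse estimate on $e$, the trace inequality for $\nabla\psi$, and Lemma~\ref{lem:interp}. This gives $C\mu_{e,1}$ times a local $H^2$-seminorm of $\phi$. The paper lists this term in its identity but never estimates it.

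(iii) Drop the alternative ``or of $\phi-\Pi_h\phi$ (trace inequality plus Lemma~\ref{lem:interp})''. A $\partial_{nn}$ trace of $\phi-\Pi_h\phi$ is not available for $\phi\in H^2$, and after your regrouping it never occurs.
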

\begin{proof}
Set
$e:=u_i-\Lambda_hu_i. $
We decompose the error into two parts using the enriching operator (recalling Lemma \ref{lem:enriching}):
\[
e=(u_i-E_h\Lambda_hu_i)+(E_h\Lambda_hu_i-\Lambda_hu_i)=: e_1+e_2.
\]
Hence
\[
\|e\|_h\le \|e_1\|_h+\|e_2\|_h.
\] 
By Lemma \ref{lem:enriching}, we get
\begin{equation}\label{e2}
    \|e_2\|_h
=
\|E_h\Lambda_hu_i-\Lambda_hu_i\|_h
\le
C\left(
\sum_{e\in\mathcal E_h}|e|^{-1}
\|[\![\partial_{n}\Lambda_hu_i]\!]\|_{L^2(e)}^2
\right)^{1/2}
\le
C\sum_{e\in\mathcal E_h}\mu_{e,1}.
\end{equation}

It remains to bound $\|e_1\|_h$. Since $e_1\in H_0^2(\Omega)$, we have $\|e_1\|_h=|e_1|_{H^2(\Omega)}.$ 
Let $\phi:=e_1,\ 
\phi_h:=\Pi_h\phi,
\ 
\eta:=\phi-\phi_h.
$ 
Using \eqref{eq:cont-eig} and Lemma \ref{lambda_h}, we obtain
\begin{equation}\label{e1}
    \begin{aligned}
       |e_1|_{H^2(\Omega)}^2
&=
\lambda_i b(u_i-P_hu_i,\phi)
+\Big(\lambda_i b(P_hu_i,\eta)-a_h(\Lambda_hu_i,\eta)\Big)
+ a_h(\Lambda_hu_i-E_h\Lambda_hu_i,\phi)\\
& =: I_1 +I_2 + I_3.
    \end{aligned}
\end{equation}
Combining the Poincar\'e inequality and Lemma \ref{lem:L2control}, the first term $I_1$ may be estimated as
\[
|\lambda_i b(u_i-P_hu_i,\phi)|
\le
C\lambda_i\|u_i-P_hu_i\|_{L^2(\Omega)}|\phi|_{H^2(\Omega)} \le C\lambda_i(1+M_{h,J})\rho_h\|u_i-\Lambda_h u_i\|_{h}|\phi|_{H^2(\Omega)}.
\]
For the second term $I_2$, we start from the alternative representation of the $C^0$IPG bilinear form. Since
$\Lambda_hu_i\in V_h$ is piecewise polynomial and $\eta\in H^2(\Omega,\mathcal T_h)\cap H_0^1(\Omega)$, by the integration by parts formula and the definition of $a_h(\cdot,\cdot)$, we have
\begin{equation}\label{ah2}
\begin{aligned}
a_h(\Lambda_hu_i,\eta)
={}&
\sum_{T\in\mathcal T_h}\int_T \Delta^2(\Lambda_hu_i)\,\eta\,dx
 +
\sum_{e\in\mathcal E_h}\int_e
\left\{\!\!\left\{\partial_{nn}\eta\right\}\!\!\right\}
[\![\partial_{n}\Lambda_hu_i]\!]\,ds
\nonumber\\
&+
\sum_{e\in\mathcal E_h^I}\int_e
[\![\partial_{n}(\Delta\Lambda_hu_i)]\!]\,\eta\,ds
 - \sum_{e\in\mathcal E_h^I}\int_e
[\![\partial_{nn}\Lambda_hu_i]\!]\,
\{\!\{\partial_{n}\eta\}\!\}\,ds
\nonumber\\
&-
\sum_{e\in\mathcal E_h^I}\int_e
[\![\partial_{nt}\Lambda_hu_i]\!]\,\partial_{t}\eta\,ds
 + \sigma\sum_{e\in\mathcal E_h}|e|^{-1}\int_e
[\![\partial_{n}\Lambda_hu_i]\!]\,[\![\partial_{n}\eta]\!]\,ds.
\notag
\end{aligned}
\end{equation}
For ease of note, we set $\psi_h=\lambda_i P_hu_i-\Delta^2(\Lambda_hu_i)$. Then, using \eqref{e1} and \eqref{ah2}, it holds that
\begin{equation}
\begin{aligned}
I_2
={}&
\sum_{T\in\mathcal T_h}\int_T \psi_h\eta\,dx
 -
\sum_{e\in\mathcal E_h}\int_e
\left\{\!\!\left\{\partial_{nn}\eta\right\}\!\!\right\}
[\![\partial_{n}\Lambda_hu_i]\!]\,ds
\nonumber\\
&-
\sum_{e\in\mathcal E_h^I}\int_e
[\![\partial_{n}(\Delta\Lambda_hu_i)]\!]\,\eta\,ds
+ \sum_{e\in\mathcal E_h^I}\int_e
[\![\partial_{nn}\Lambda_hu_i]\!]\,
\{\!\{\partial_{n}\eta\}\!\}\,ds
\nonumber\\
&+
\sum_{e\in\mathcal E_h^I}\int_e
[\![\partial_{nt}\Lambda_hu_i]\!]\,\partial_{t}\eta\,ds
- \sigma\sum_{e\in\mathcal E_h}|e|^{-1}\int_e
[\![\partial_{n}\Lambda_hu_i]\!]\,[\![\partial_{n}\eta]\!]\,ds .
\end{aligned}
  \label{R2}  
\end{equation}
Now we estimate the first term in \eqref{R2}. By the Cauchy--Schwarz inequality and Lemma \ref{lem:interp}, we obtain
\begin{align}\label{mut}
\left|
\sum_{T\in\mathcal T_h}\int_T \psi_h\eta\,dx
\right|
&\le
\sum_{T\in\mathcal T_h}
\|\lambda_i P_hu_i-\Delta^2(\Lambda_hu_i)\|_{L^2(T)}
\|\eta\|_{L^2(T)}
\nonumber\\
&\le
\left(
\sum_{T\in\mathcal T_h} h_T^4
\|\lambda_i P_hu_i-\Delta^2(\Lambda_hu_i)\|_{L^2(T)}^2
\right)^{1/2}
\left(
\sum_{T\in\mathcal T_h} h_T^{-4}\|\eta\|_{L^2(T)}^2
\right)^{1/2}
\nonumber\\
& \le
C\left(\sum_{T\in\mathcal T_h}\mu_T^2\right)^{1/2}
|\phi|_{H^2(\Omega)}.
\end{align}
Using similar techniques, the remaining terms may be estimated by the Cauchy--Schwarz inequality, the trace theorem and Lemma \ref{lem:interp} as follows: 
\begin{equation}\label{resteta}
    \begin{aligned}
  \left|
\sum_{e\in\mathcal E_h}\int_e
\left\{\!\!\left\{\partial_{nn}^2\eta\right\}\!\!\right\}
[\![\partial_{n}\Lambda_hu_i]\!]\,ds
\right| & \leq C\left(\sum_{e\in\mathcal E_h}\mu_{e,1}^2\right)^{1/2}
 |\phi|_{H^2(\Omega)}, \\
\left|
\sum_{e\in\mathcal E_h^I}\int_e
[\![\partial_{n}(\Delta\Lambda_hu_i)]\!]\,\eta\,ds
\right|
 & \le
C\left(
\sum_{e\in\mathcal E_h^I}\mu_{e,3}^2
\right)^{1/2}
|\phi|_{H^2(\Omega)},\\
\left|
\sum_{e\in\mathcal E_h^I}\int_e
[\![\partial_{nn}^2\Lambda_hu_i]\!]\,
\{\!\{\partial_{n}\eta\}\!\}\,ds
\right|
&\le
C\left(
\sum_{e\in\mathcal E_h^I}\mu_{e,2}^2
\right)^{1/2}
|\phi|_{H^2(\Omega)},\\
\left|
\sigma\sum_{e\in\mathcal E_h}|e|^{-1}\int_e
[\![\partial_{n}\Lambda_hu_i]\!]\,[\![\partial_{n}\eta]\!]\,ds
\right|
&\le
C\left(
\sum_{e\in\mathcal E_h}\mu_{e,1}^2
\right)^{1/2}
|\phi|_{H^2(\Omega)}.
\end{aligned}
\end{equation}
Combining \eqref{mut} and \eqref{resteta}, we get
\[
I_2 \leq \left|
\lambda_i b(P_hu_i,\eta)-a_h(\Lambda_hu_i,\eta)
\right|
\le
C\,
\mu_h(\mathcal T_h,\lambda_i,u_i)\,
|\phi|_{H^2(\Omega)}.
\]

For the third term $I_3$, using Lemma \ref{lem:enriching} and $\phi \in H_0^2(\Omega)$ yields that
\[
|a_h(\Lambda_hu_i-E_h\Lambda_hu_i,\phi)|
\le
C\|\Lambda_hu_i-E_h\Lambda_hu_i\|_h|\phi|_{h}
\le
C\,\mu_h(\mathcal T_h,\lambda_i,u_i)\,|\phi|_{H^2(\Omega)}.
\]
Substituting the above bounds for $I_1,\ I_2$ and $I_3$ into \eqref{e1}, we have
\[
|e_1|_{H^2(\Omega)}
\le
C\Big(
\mu_h(\mathcal T_h,\lambda_i,u_i)\Big)
+
C\lambda_i(1+M_{h,J})\rho_h\|u_i-\Lambda_h u_i\|_{h},
\]
which, together with \eqref{e2}, completes the proof.
\end{proof}

We now turn to the efficiency of the theoretical error estimator.

\begin{theorem}\label{thm:efficiency}
There exists a constant $C$ such that
\[
\mu_h(\mathcal T_h,\lambda_i,u_i)
\le
C\left(
1+\lambda_i(1+M_{h,J})\rho_h
\right)
\|u_i-\Lambda_hu_i\|_h,\qquad i \in J.
\]
\end{theorem}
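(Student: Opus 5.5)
The plan is the standard Verfürth bubble-function argument, applied to $w_h:=\Lambda_hu_i\in V_h$ with the exact pair $(\lambda_i,u_i)$ as reference. The only novelty is that the ``right-hand side'' of the discrete residual is $\lambda_iP_hu_i$ rather than $\lambda_i w_h$. The mismatch $\lambda_i(u_i-P_hu_i)$ is exactly what produces the factor $\lambda_i(1+M_{h,J})\rho_h$ through Lemma~\ref{lem:L2control}. Throughout, I write $e:=u_i-\Lambda_hu_i$ as in the proof of Theorem~\ref{rel}, and I use that $\Delta^2u_i=\lambda_iu_i$ in $L^2(\Omega)$, with $u_i\in H^2_0(\Omega)$ so that $[\![\partial_n u_i]\!]=0$.

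\emph{Jump term $\mu_{e,1}$.} This is immediate: $[\![\partial_n\Lambda_hu_i]\!]=-[\![\partial_n e]\!]$, and the penalty part of $\|e\|_h$ contains $\sigma\sum_e h_e^{-1}\|[\![\partial_n e]\!]\|_{L^2(e)}^2$. Hence $\sum_e\mu_{e,1}^2\le\|e\|_h^2$.

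\emph{Element residual $\mu_T$.} Let $b_T$ be the element bubble, squared so that it vanishes with its gradient on $\partial T$, and let $\bar\psi_T$ be a polynomial approximation of $\psi_h=\lambda_iP_hu_i-\Delta^2\Lambda_hu_i$. Since $P_hu_i\in W_h\subset V_h$ is polynomial, $\psi_h$ is already polynomial and no oscillation term arises. Test with $v=b_T\psi_h\in H^2_0(T)$ and integrate by parts twice on $T$:
\[
\|\psi_h\|_{L^2(T)}^2\lesssim\int_T\psi_h v=\int_T\lambda_i(P_hu_i-u_i)v+\int_T D^2e:D^2v .
\]
Here I used $\int_T\lambda_iu_iv=\int_T D^2u_i:D^2v$. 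Inverse estimates give $|v|_{H^2(T)}\lesssim h_T^{-2}\|\psi_h\|_{L^2(T)}$, so
\[
\mu_T\lesssim |e|_{H^2(T)}+h_T^2\lambda_i\|u_i-P_hu_i\|_{L^2(T)} .
\]

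\emph{Edge terms $\mu_{e,2}$ and $\mu_{e,3}$.} Use edge bubbles on the patch $\omega_e=T^+\cup T^-$ together with extensions of the jump polynomials. For $\mu_{e,3}$, take $v=b_e^2\,\mathcal{E}([\![\partial_n\Delta\Lambda_hu_i]\!])$, chosen so that $\partial_n v$ has zero average contribution (or vanishes) on $e$. Piecewise integration by parts of $\int_{\omega_e}D^2e:D^2v$ then isolates $\int_e[\![\partial_n\Delta\Lambda_hu_i]\!]v$, up to element residuals already bounded and terms involving $[\![\partial_{nn}\Lambda_hu_i]\!]$. For $\mu_{e,2}$, take $v$ vanishing on $e$ with prescribed $\partial_n v=b_e[\![\partial_{nn}\Lambda_hu_i]\!]$, for instance a polynomial times the linear function that vanishes on $e$, cut off by the patch bubble. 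This isolates $\int_e[\![\partial_{nn}\Lambda_hu_i]\!]\partial_nv$.

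The main obstacle is the nonconformity: since $e\notin H^2_0$, the integration by parts over $\omega_e$ leaves edge terms $\int_e\{\!\{\partial_{nn}\Lambda_hu_i\}\!\}[\![\partial_n v]\!]$ and tangential terms. I plan to keep $v\in H^2_0(\omega_e)$ globally $C^1$ to kill these, so that only $\int_{\omega_e}D^2_he:D^2v$ appears. The bound $\int_{\omega_e}D^2_he:D^2v\le\|e\|_{h,\omega_e}|v|_{H^2(\omega_e)}$ then follows, and scaling yields
\[
\mu_{e,2}+\mu_{e,3}\lesssim\|e\|_{h,\omega_e}+\sum_{T\subset\omega_e}\bigl(\mu_T+h_T^2\lambda_i\|u_i-P_hu_i\|_{L^2(T)}\bigr).
\]
If a $C^1$ bubble with the needed trace is awkward to construct, I will instead use the element-by-element identity for $a_h$ (the same integration-by-parts formula used in the proof of Theorem~\ref{rel}) applied to $e$. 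The extra $[\![\partial_n e]\!]$ terms are then absorbed by the penalty part of $\|e\|_h$.

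\emph{Summation.} Summing over $T$ and $e$ with finite overlap, and using $h_T\le h_0\lesssim 1$ together with Lemma~\ref{lem:L2control}, gives
\[
\lambda_i\|u_i-P_hu_i\|_{L^2(\Omega)}\le C\lambda_i(1+M_{h,J})\rho_h\|e\|_h .
\]
This yields $\mu_h\le C(1+\lambda_i(1+M_{h,J})\rho_h)\|e\|_h$, and all constants come only from shape regularity, polynomial degree and $\sigma$; in particular, none depends on the gaps within the cluster.
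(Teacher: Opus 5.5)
Your strategy matches the paper's, whose proof only cites the standard bubble-function argument. Your element-residual, $\mu_{e,1}$, $\mu_{e,2}$ and summation steps are correct; for $\mu_{e,2}$ take $v=\ell_e\,\mathcal E(q)\,b_e^2$, where the square is needed so that $\nabla v=0$ on $\partial\omega_e$. The $\mu_{e,3}$ step, however, does not go through as written.

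\textbf{The tangential term.} $C^1$-continuity of the test function does \emph{not} remove the tangential edge term here. For $v\in H^2_0(\omega_e)$ with $v|_e\neq0$ and $r:=[\![\partial_n(\Delta\Lambda_hu_i)]\!]$, elementwise integration by parts gives
\begin{align*}
\int_e r\,v\,ds={}&\int_{\omega_e}\psi_h v\,dx+\lambda_i\int_{\omega_e}(u_i-P_hu_i)\,v\,dx+\int_e[\![\partial_{nn}\Lambda_hu_i]\!]\,\partial_n v\,ds\\
&+\int_e[\![\partial_{nt}\Lambda_hu_i]\!]\,\partial_t v\,ds-\int_{\omega_e}D_h^2(u_i-\Lambda_hu_i):D^2v\,dx .
\end{align*}
The fourth term survives. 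Indeed $\partial_tv\not\equiv0$ on $e$, and $[\![\partial_{nt}\Lambda_hu_i]\!]=\partial_t[\![\partial_n\Lambda_hu_i]\!]$ is generally nonzero for $C^0$IPG functions. The missing step is an integration by parts along $e$, using $\partial_tv=0$ at the endpoints of $e$:
\[
\int_e[\![\partial_{nt}\Lambda_hu_i]\!]\,\partial_tv\,ds=-\int_e[\![\partial_n\Lambda_hu_i]\!]\,\partial_{tt}v\,ds .
\]
By a one-dimensional inverse estimate this contributes $\sigma^{-1/2}\mu_{e,1}$ to the bound on $\mu_{e,3}$. Your final local bound is therefore true, but not for the reason you give.

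\textbf{The test function.} Take $\mathcal E(r)$ constant in the normal direction. Your choice $v=b_e^2\mathcal E(r)$ is not $C^1$ across $e$. The restriction $b_e|_{T^\pm}$ is a product of barycentric coordinates of $T^\pm$, whose normal derivatives differ on $e$. Hence $[\![\partial_nv]\!]=2b_e\,r\,[\![\partial_nb_e]\!]\neq0$, so $v\notin H^2_0(\omega_e)$, and the identity $a(u_i,v)=\lambda_ib(u_i,v)$ is not available. A valid choice is, on each of $T^\pm$,
\[
v:=b_e^3\,\mathcal E(r)-3\,\ell_e\,b_e^2\,(\partial_nb_e)\,\mathcal E(r),
\]
with $\ell_e$ affine, $\ell_e|_e=0$ and $\partial_n\ell_e=1$. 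Then $v|_e=b_e^3r$, $\partial_nv|_e=0$ from both sides, and $v$ and $\nabla v$ vanish on $\partial\omega_e$. Thus $v\in H^2_0(\omega_e)$, and the $[\![\partial_{nn}\Lambda_hu_i]\!]$ term drops out as well.

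\textbf{The fallback.} Your fallback uses a non-$C^1$ $v$ together with the elementwise identity for $a_h$ applied to the error. It does not close. The obstruction is not a $[\![\partial_n(u_i-\Lambda_hu_i)]\!]$ term but $\int_e\{\!\{\partial_{nn}(u_i-\Lambda_hu_i)\}\!\}[\![\partial_nv]\!]\,ds$. This term involves edge traces of $D^2u_i$ and is not bounded by $\|u_i-\Lambda_hu_i\|_h$. Conforming $C^1$ edge test functions are therefore essential here, not optional.
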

\begin{proof}
The proof relies on standard bubble-function arguments applied separately to the element residual terms and the edge jump terms (see \cite{1994A,Gallistl2015, Brenner2012}).
\end{proof}

\subsection{Equivalence of error estimators}\label{sec:equivalence}
In this subsection, we show that the theoretical estimator is locally equivalent to the computable estimator. This equivalence plays a key role in proving Theorem \ref{thm:main-reliability}. We first recall a basis property of $\{P_hu_i\}_{i\in J}$ and $\{\Lambda_hu_i\}_{i\in J}$, which follows from Lemma 5.1 of \cite{Gallistl2015}.

\begin{lemma}\label{lem:gallistl-basis}
Assume that
\[
\varepsilon:=\max_{i\in J}\|u_i-\Lambda_hu_i\|_{L^2(\Omega)}
\le \sqrt{1+\frac{1}{2N}}-1,
\qquad N=\operatorname{card}(J).
\]
Then both $\{P_hu_i\}_{i\in J}$ and $\{\Lambda_hu_i\}_{i\in J}$ form a basis of $W_h$. Moreover, for any $w_h\in W_h$ with $\|w_h\|_{L^2(\Omega)}=1$, if
\[
w_h=\sum_{i\in J}\beta_i P_hu_i
\qquad\text{and}\qquad
w_h=\sum_{i\in J}\gamma_i \Lambda_hu_i,
\]
then
\[
\max\left\{\sum_{i\in J}|\beta_i|^2,\ \sum_{i\in J}|\gamma_i|^2\right\}
\le 2+4N.
\]
\end{lemma}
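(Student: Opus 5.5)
The plan is to reduce both claims to spectral bounds on the two $N\times N$ Gram matrices
\[
G_{ij}:=b(P_hu_i,P_hu_j),\qquad H_{ij}:=b(\Lambda_hu_i,\Lambda_hu_j),\qquad i,j\in J .
\]
Since $\dim W_h=N$, it suffices to show that $G$ and $H$ are invertible. Then $\{P_hu_i\}_{i\in J}$ and $\{\Lambda_hu_i\}_{i\in J}$ are $N$ linearly independent vectors in $W_h$, hence bases. For the coefficient bound, write $w_h=\sum_i\beta_iP_hu_i$ with $\|w_h\|_{L^2(\Omega)}=1$. Then $1=\beta^TG\beta\ge\lambda_{\min}(G)|\beta|^2$, so $\sum_i|\beta_i|^2\le\lambda_{\min}(G)^{-1}$. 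In the same way, $\sum_i|\gamma_i|^2\le\lambda_{\min}(H)^{-1}$. Everything therefore reduces to lower bounds on the smallest eigenvalues of $G$ and $H$.

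For $G$, use that the $u_i$ are $L^2$-orthonormal and that $P_h$ is an $L^2$-orthogonal projection. This gives
\[
b(P_hu_i,P_hu_j)=b(u_i,u_j)-b(u_i-P_hu_i,u_j-P_hu_j)=\delta_{ij}-b(u_i-P_hu_i,u_j-P_hu_j).
\]
By Lemma~\ref{lem:L2control}, $\|u_i-P_hu_i\|_{L^2(\Omega)}\le\|u_i-\Lambda_hu_i\|_{L^2(\Omega)}\le\varepsilon$, so every entry of $G-I$ is bounded by $\varepsilon^2$.

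For $H$, expand $\Lambda_hu_i=u_i-(u_i-\Lambda_hu_i)$. This gives
\[
H_{ij}=\delta_{ij}-b(u_i-\Lambda_hu_i,u_j)-b(u_i,u_j-\Lambda_hu_j)+b(u_i-\Lambda_hu_i,u_j-\Lambda_hu_j).
\]
Hence $|H_{ij}-\delta_{ij}|\le2\varepsilon+\varepsilon^2=(1+\varepsilon)^2-1$. The hypothesis on $\varepsilon$ is chosen exactly so that this is at most $\tfrac1{2N}$. Since $\varepsilon^2\le 2\varepsilon+\varepsilon^2$, the same bound $\tfrac1{2N}$ also holds for the entries of $G-I$.

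Next I would bound the spectral norm of an $N\times N$ matrix with entries of size at most $\delta$. By the Frobenius norm or by Gershgorin, this norm is at most $N\delta$. Thus $\|G-I\|_2\le\tfrac12$ and $\|H-I\|_2\le\tfrac12$, which gives $\lambda_{\min}(G),\lambda_{\min}(H)\ge\tfrac12$. Both matrices are invertible, which proves the basis property. It also yields $\sum_i|\beta_i|^2\le2$ and $\sum_i|\gamma_i|^2\le2$, and these are well within the stated bound $2+4N$.

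There is no substantial obstacle here. The only points needing care are, first, the identity for $G$, which uses the orthogonality of $u_i-P_hu_i$ to $W_h$. Second, the estimate $\|u_i-P_hu_i\|_{L^2(\Omega)}\le\|u_i-\Lambda_hu_i\|_{L^2(\Omega)}$ from Lemma~\ref{lem:L2control}, which holds because $\Lambda_hu_i\in W_h$ and $P_hu_i$ is the best $L^2$ approximation from $W_h$. Third, the fact that $\dim W_h=\operatorname{card}(J)=N$. If one prefers to follow \cite{Gallistl2015} literally, the weaker constant $2+4N$ comes out of a cruder entrywise argument without the spectral-norm step. The argument above only needs to land below that bound.
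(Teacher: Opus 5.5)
Your proof is correct. It differs from the paper mainly in that the paper does not prove this lemma at all: it imports the statement from Lemma~5.1 of \cite{Gallistl2015}.

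You give a self-contained Gram-matrix perturbation argument instead:
\begin{itemize}
\item the identity $b(P_hu_i,P_hu_j)=\delta_{ij}-b(u_i-P_hu_i,u_j-P_hu_j)$;
\item the entrywise bound $|b(\Lambda_hu_i,\Lambda_hu_j)-\delta_{ij}|\le(1+\varepsilon)^2-1\le\frac1{2N}$;
\item the estimate $\|A\|_2\le N\max_{i,j}|A_{ij}|$.
\end{itemize}
Together these give smallest eigenvalue at least $\frac12$ for both Gram matrices. That yields linear independence, hence bases since $\dim W_h=N$, and $\sum_i|\beta_i|^2,\ \sum_i|\gamma_i|^2\le 2$.

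Your ingredients are all available in the paper's setting: $L^2$-orthonormality of $\{u_i\}_{i\in J}$, $\dim W_h=N$ from $b(u_{h,j},u_{h,k})=\delta_{jk}$, $\Lambda_hu_i\in W_h$, and the best-approximation property of $P_h$. The first is only tacit in the paper, which states the normalization but not the orthogonality within a multiple eigenvalue, so you should state it explicitly.

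The citation keeps the paper short and its constants aligned with Gallistl's framework. Your version makes the lemma checkable in place and gives the sharper constant $2$ instead of $2+4N$. That improvement propagates. In the reverse direction of Theorem~\ref{thm:local-equivalence} the factor $2N+4N^2$ becomes $2N$, since each $u_{h,j}$ has unit norm. Correspondingly, $\sqrt{2N+4N^2}$ becomes $\sqrt{2N}$ in the efficiency bound of Theorem~\ref{thm:main-reliability}.

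Your closing remark on how $2+4N$ arises in \cite{Gallistl2015} is unverified speculation and you do not need it. The hypothesis $\varepsilon\le\sqrt{1+1/(2N)}-1$ is equivalent to exactly your entrywise condition $2\varepsilon+\varepsilon^2\le\frac1{2N}$. So it is safer just to note that your bound implies the stated one.
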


We now use this basis property to derive the local equivalence of the two estimators.
\begin{theorem}[Local equivalence]\label{thm:local-equivalence}
Assume that the exact cluster eigenvalues $\{\lambda_i\}_{i\in J}$ and the discrete cluster eigenvalues $\{\lambda_{h,j}\}_{j\in J}$ are contained in a common compact interval
$[\lambda_m,\lambda_{M+1}]$
and that the assumptions of Lemma~\ref{lem:gallistl-basis} are satisfied. Then, for any $T\in\mathcal T_h$,
{
\begin{equation}\label{5.6}
   N^{-1}\left(\frac{\lambda_{m}}{\lambda_{M+1}}\right)^2\sum_{i\in J}\mu_h^2(T,\lambda_i,u_i)
\le
 \eta_h^2(T)
\le
(2N+4N^2)\sum_{i\in J}\mu_h^2(T,\lambda_i,u_i), 
\end{equation}
}
where the local computable estimator is given by
\[
\eta_h^2(T)
:=
\sum_{j\in J}
\left(
\eta_T^2(\lambda_{h,j},u_{h,j})
+
\sum_{e\subset\partial T}
\eta_{e,1}^2(\lambda_{h,j},u_{h,j})
+
\sum_{\ell=2,3}\sum_{e\subset\partial T\cap\mathcal E_h^I}
\eta_{e,\ell}^2(\lambda_{h,j},u_{h,j})
\right),
\]
the local theoretical estimator is given by
\[
\mu_h^2(T,\lambda_i,u_i)
:=
\mu_T^2(\lambda_i,u_i)
+
\sum_{e\subset\partial T}
\mu_{e,1}^2(\lambda_i,u_i)
+
\sum_{\ell=2,3}\sum_{e\subset\partial T\cap\mathcal E_h^I}
\mu_{e,\ell}^2(\lambda_i,u_i),
\]
and $N=\operatorname{card}(J)$.
\end{theorem}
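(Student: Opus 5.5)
The plan is to show that both local estimators are built from one and the same linear \emph{residual map} on $W_h$, evaluated on two different bases of $W_h$. With that in hand, both inequalities in \eqref{5.6} follow from the Cauchy--Schwarz inequality and coefficient bounds: one coefficient bound from the eigenvalue localisation, the other from Lemma~\ref{lem:gallistl-basis}.

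\textbf{Step 1: the residual map.} For $w_h=\sum_{j\in J}c_ju_{h,j}\in W_h$ define
\[
\mathcal R(w_h):=\sum_{j\in J}c_j\bigl(\lambda_{h,j}u_{h,j}-\Delta^2u_{h,j}\bigr)\quad\text{on each }T.
\]
Collect the element residual $\mathcal R(w_h)|_T$ and the edge quantities $[\![\partial_n w_h]\!]$, $[\![\partial_{nn}w_h]\!]$, $[\![\partial_n(\Delta w_h)]\!]$ into a vector $\mathcal Q(w_h)$. The map $w_h\mapsto\mathcal Q(w_h)$ is linear. By definition,
\[
\eta_h^2(T)=\sum_{j\in J}\|\mathcal Q(u_{h,j})\|_T^2,
\]
where $\|\cdot\|_T^2$ is the weighted sum of squared $L^2$ norms with weights $h_T^4$, $\sigma h_e^{-1}$, $h_e$ and $h_e^3$.

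\textbf{Step 2: identify the theoretical residual.} From the proof of Lemma~\ref{lambda_h}, with $\beta_{ij}:=b(u_i,u_{h,j})$,
\[
\Lambda_hu_i=\sum_{j\in J}\frac{\lambda_i}{\lambda_{h,j}}\beta_{ij}u_{h,j}
\qquad\text{and}\qquad
P_hu_i=\sum_{j\in J}\beta_{ij}u_{h,j}.
\]
Hence
\[
\lambda_iP_hu_i-\Delta^2\Lambda_hu_i=\sum_{j\in J}\frac{\lambda_i}{\lambda_{h,j}}\beta_{ij}\bigl(\lambda_{h,j}u_{h,j}-\Delta^2u_{h,j}\bigr)=\mathcal R(\Lambda_hu_i).
\]
The jumps of $\Lambda_hu_i$ are likewise the jumps entering $\mathcal Q(\Lambda_hu_i)$. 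So $\mu_h^2(T,\lambda_i,u_i)=\|\mathcal Q(\Lambda_hu_i)\|_T^2$. This identification, namely that the pairing of $\lambda_iP_hu_i$ with $\Lambda_hu_i$ is exactly what makes the residual linear in $\Lambda_hu_i$, is the key point of the proof and the step I expect to need the most care. It is essentially Lemma~\ref{lambda_h} read at the level of coefficients.

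\textbf{Step 3: lower bound.} Write $\mathcal Q(\Lambda_hu_i)=\sum_{j\in J} c_{ij}\,\mathcal Q(u_{h,j})$ with $c_{ij}:=\lambda_i\beta_{ij}/\lambda_{h,j}$. Cauchy--Schwarz gives
\[
\mu_h^2(T,\lambda_i,u_i)\le\Bigl(\sum_{j\in J}c_{ij}^2\Bigr)\eta_h^2(T).
\]
All eigenvalues lie in $[\lambda_m,\lambda_{M+1}]$, so $\lambda_i/\lambda_{h,j}\le\lambda_{M+1}/\lambda_m$. Also $\sum_j\beta_{ij}^2=\|P_hu_i\|_{L^2(\Omega)}^2\le 1$. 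Therefore $\sum_jc_{ij}^2\le(\lambda_{M+1}/\lambda_m)^2$. Summing over the $N$ indices $i\in J$ yields the left inequality of \eqref{5.6}.

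\textbf{Step 4: upper bound.} By Lemma~\ref{lem:gallistl-basis}, $\{\Lambda_hu_i\}_{i\in J}$ is a basis of $W_h$. Each $u_{h,j}$ has $\|u_{h,j}\|_{L^2(\Omega)}=1$, so it can be written as
\[
u_{h,j}=\sum_{i\in J}\gamma^{(j)}_i\Lambda_hu_i,
\qquad
\sum_{i\in J}|\gamma^{(j)}_i|^2\le 2+4N.
\]
Linearity of $\mathcal Q$ then gives $\mathcal Q(u_{h,j})=\sum_i\gamma_i^{(j)}\mathcal Q(\Lambda_hu_i)$. Cauchy--Schwarz yields
\[
\|\mathcal Q(u_{h,j})\|_T^2\le(2+4N)\sum_{i\in J}\mu_h^2(T,\lambda_i,u_i).
\]
Summing over the $N$ indices $j\in J$ gives the factor $2N+4N^2$ on the right of \eqref{5.6}.

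Since every estimate is applied termwise on each element and edge, the weights are identical for both estimators and no mesh constants enter.
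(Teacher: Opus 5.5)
Your proof is correct and follows the paper's argument. The left inequality uses the same expansion $\Lambda_hu_i=\sum_{j\in J}(\lambda_i/\lambda_{h,j})\,b(u_i,u_{h,j})\,u_{h,j}$ with Cauchy--Schwarz and the ratio bound $(\lambda_{M+1}/\lambda_m)^2$, and the right inequality uses the $\Lambda_h$-basis coefficient bound $2+4N$ from Lemma~\ref{lem:gallistl-basis}.

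The only difference is how you get $\lambda_{h,j}u_{h,j}-\Delta^2u_{h,j}=\sum_{i\in J}\gamma^{(j)}_i\bigl(\lambda_iP_hu_i-\Delta^2\Lambda_hu_i\bigr)$. Your linear residual map $\mathcal R$ gives it directly by linearity. The paper instead also expands $u_{h,j}$ in the basis $\{P_hu_i\}_{i\in J}$ and uses the nonsingularity of $\bigl(b(u_i,u_{h,k})\bigr)_{i,k\in J}$ to relate the two sets of coefficients, so your route needs only that $\{\Lambda_hu_i\}_{i\in J}$ is a basis.
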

\begin{proof}
For clarity, we present the details for both directions. Throughout the proof, $\|\cdot\|_T$ and $\|\cdot\|_e$ denote the $L^2(T)$ and $L^2(e)$ norms, respectively. For each $i\in J$, expand
\begin{equation}\label{5.1}
    P_hu_i=\sum_{j\in J}\alpha_{ij}u_{h,j},
\qquad
\Lambda_hu_i=\sum_{j\in J}\nu_{ij}u_{h,j}.
\end{equation}
Since $\alpha_{ij}=b(u_i,u_{h,j}), $ we have $\sum_{j\in J}\alpha_{ij}^2=\|P_hu_i\|_{L^2(\Omega)}^2\le 1. $ Moreover, it holds that
\[
\nu_{ij}
= b(R_hu_i,u_{h,j})
=
\frac{\lambda_i}{\lambda_{h,j}}\alpha_{ij},
\]
and therefore
\begin{equation}
    \sum_{j\in J}\nu_{ij}^2 =
\sum_{j\in J}\left(\frac{\lambda_i}{\lambda_{h,j}}\right)^2\alpha_{ij}^2
\le
\left(\frac{\lambda_{M+1}}{\lambda_{m}}\right)^2.
\label{5.3}
\end{equation}
Using \eqref{5.1}-\eqref{5.3} and the Cauchy--Schwarz inequality, we obtain
\begin{equation*}
\begin{aligned}
\mu_{T}^2(\lambda_i,u_i)
&=
h_T^4\left\|
\sum_{j\in J}\nu_{ij}\left(\lambda_{h,j}u_{h,j}-\Delta^2u_{h,j}\right)
\right\|_T^2\\
&\le
\left(\sum_{j\in J}\nu_{ij}^2\right)
\left(
\sum_{j\in J}h_T^4\|\lambda_{h,j}u_{h,j}-\Delta^2u_{h,j}\|_T^2
\right)\\
&\le
\left(\frac{\lambda_{M+1}}{\lambda_{m}}\right)^2 \sum_{j\in J} \eta_{T}^2(\lambda_{h,j},u_{h,j}).
\end{aligned}  
\end{equation*}
The edge terms are estimated in the same way. Indeed, since the jump operators are linear, by \eqref{5.3} and the Cauchy--Schwarz inequality, it holds that 
\begin{align*}
\mu_{e,1}^2(\lambda_i,u_i)
&=
\sigma h_e^{-1}
\left\|
\sum_{j\in J}\nu_{ij}
[\![\partial_{n} u_{h,j}]\!]
\right\|_e^2
\\
&\le
\left(\sum_{j\in J}\nu_{ij}^2\right)
\sum_{j\in J}
\sigma h_e^{-1}
\|[\![\partial_{n} u_{h,j}]\!]\|_e^2
\\
&\le
\left(\frac{\lambda_{M+1}}{\lambda_m}\right)^2
\sum_{j\in J}
\eta_{e,1}^2(\lambda_{h,j},u_{h,j}).
\end{align*}
Similarly, for the remaining edge terms, we obtain
\[
\mu_{e,2}^2(\lambda_i,u_i)
\le
\left(\frac{\lambda_{M+1}}{\lambda_m}\right)^2
\sum_{j\in J}
\eta_{e,2}^2(\lambda_{h,j},u_{h,j}),
\]
\[
\mu_{e,3}^2(\lambda_i,u_i)
\le
\left(\frac{\lambda_{M+1}}{\lambda_m}\right)^2
\sum_{j\in J}
\eta_{e,3}^2(\lambda_{h,j},u_{h,j}).
\]
Combining the element and edge estimates gives
\[
\mu_h^2(T,\lambda_i,u_i)
\le
\left(\frac{\lambda_{M+1}}{\lambda_m}\right)^2
\eta_h^2(T).
\]
Summing over all $i\in J$ and rearranging the inequality, we obtain
\begin{equation}\label{5.8}
    N^{-1}
\left(\frac{\lambda_m}{\lambda_{M+1}}\right)^2
\sum_{i\in J}
\mu_h^2(T,\lambda_i,u_i)
\le
\eta_h^2(T).
\end{equation}
It remains to prove the reverse estimate. By Lemma~\ref{lem:gallistl-basis}, for each $j\in J$, there exist coefficients $\{\beta_{ji}\}_{i\in J}$ and $\{\gamma_{ji}\}_{i\in J}$ such that
\[
u_{h,j}
=
\sum_{i\in J}\beta_{ji}P_hu_i
=
\sum_{i\in J}\gamma_{ji}\Lambda_hu_i,
\]
with
\[
\sum_{i\in J}\beta_{ji}^2\le 2+4N,
\qquad
\sum_{i\in J}\gamma_{ji}^2\le 2+4N.
\]
Using \eqref{5.1} and comparing the coefficients of $u_{h,k}$ in the two representations of $u_{h,j}$, we have
\[
\sum_{i\in J}\beta_{ji}\alpha_{ik}=\delta_{jk},
\qquad
\sum_{i\in J}\gamma_{ji}\nu_{ik}=\delta_{jk}.
\]
Since $\nu_{ik}=(\lambda_i/\lambda_{h,k})\alpha_{ik}$, it follows that
\[
\sum_{i\in J}\gamma_{ji}\lambda_i\alpha_{ik}
=
\lambda_{h,k}\delta_{jk}
=
\lambda_{h,j}\delta_{jk}
=
\sum_{i\in J}\lambda_{h,j}\beta_{ji}\alpha_{ik}.
\]
Hence,
\[
\sum_{i\in J}
\left(
\gamma_{ji}\lambda_i-\lambda_{h,j}\beta_{ji}
\right)\alpha_{ik}=0
\qquad \forall k\in J.
\]
Since $\{P_hu_i\}_{i\in J}$ is a basis of $W_h$, the matrix $(\alpha_{ik})_{i,k\in J}$ is nonsingular. Hence
\[
\lambda_{h,j}\beta_{ji}=\lambda_i\gamma_{ji},
\qquad i\in J.
\]
Using this coefficient relation, we have
\begin{align*}
\lambda_{h,j}u_{h,j}-\Delta^2u_{h,j}
&=
\lambda_{h,j}
\sum_{i\in J}\beta_{ji}P_hu_i
-
\Delta^2
\sum_{i\in J}\gamma_{ji}\Lambda_hu_i
\\
&=
\sum_{i\in J}\gamma_{ji}
\left(
\lambda_iP_hu_i-\Delta^2\Lambda_hu_i
\right).
\end{align*}
Therefore, by the Cauchy--Schwarz inequality and the coefficient bound,
\begin{align*}
\eta_T^2(\lambda_{h,j},u_{h,j})
&=
h_T^4
\|\lambda_{h,j}u_{h,j}-\Delta^2u_{h,j}\|_T^2
\\
&=
h_T^4
\left\|
\sum_{i\in J}\gamma_{ji}
\left(
\lambda_iP_hu_i-\Delta^2\Lambda_hu_i
\right)
\right\|_T^2
\\
&\le
\left(
\sum_{i\in J}\gamma_{ji}^2
\right)
\left(
\sum_{i\in J}
h_T^4
\|\lambda_iP_hu_i-\Delta^2\Lambda_hu_i\|_T^2
\right)
\\
&\le
(2+4N)
\sum_{i\in J}
\mu_T^2(\lambda_i,u_i).
\end{align*}
Using the same techniques to estimate the edge terms, it follows that
\begin{align*}
    \eta_{e,1}^2(\lambda_{h,j},u_{h,j})
& \le
(2+4N)
\sum_{i\in J}
\mu_{e,1}^2(\lambda_i,u_i),\\
\eta_{e,2}^2(\lambda_{h,j},u_{h,j})
& \le
(2+4N)
\sum_{i\in J}
\mu_{e,2}^2(\lambda_i,u_i),\\
\eta_{e,3}^2(\lambda_{h,j},u_{h,j})
&\le
(2+4N)
\sum_{i\in J}
\mu_{e,3}^2(\lambda_i,u_i).
\end{align*}
Combining the element and edge estimates and summing over all $j\in J$ yields
\[
\eta_h^2(T)
\le
(2N+4N^2)
\sum_{i\in J}
\mu_h^2(T,\lambda_i,u_i),
\]
which, together with \eqref{5.8}, completes the proof.
\end{proof} 

Combining the results of Sections \ref{3.1} and \ref{sec:equivalence}, we obtain the proof of the main theorem.\\
{\bf{Proof of Theorem \ref{thm:main-reliability}:}}
Summing the local equivalence estimate \eqref{5.6} over all $T\in\mathcal T_h$ and combining it with Theorem \ref{rel}, we obtain
\begin{equation*}
    \begin{aligned}
        \|u_i-\Lambda_hu_i\|_h &\le C\Big(\mu_h(\mathcal{T}_h,\lambda_i,u_i)+ \lambda_i(1+M_{h,J})\rho_h \|u_i-\Lambda_h u_i\|_{h}\Big) \\
& \le C\left(\left(\frac{\lambda_{M+1}}{\lambda_{m}}\right)\eta_h
+\lambda_i(1+M_{h,J})\rho_h\|u_i-\Lambda_hu_i\|_{h}\right).
    \end{aligned}
\end{equation*}
 The second estimate follows similarly from \eqref{5.6} and Theorem \ref{thm:efficiency}, which completes the proof of Theorem \ref{thm:main-reliability}.\qed

\section{Numerical experiments}
This section presents two numerical examples in an L-shaped domain and a crack domain. We use the $C^0$IPG method with polynomial degree $k=2$ and a fixed penalty parameter $\sigma=10$. For both examples, the initial mesh is taken to be a uniform triangulation of the domain. We adopt the D\"orfler marking strategy and refine the marked elements by the newest vertex bisection. Here, $ d.o.f. $ denotes the number of degrees of freedom.
\subsection{L-shaped domain}
\begin{example}
We solve \eqref{eq:strong-eig} in the L-shaped domain 
$\Omega=(-1,1)^2\setminus([0,1]\times[-1,0])$. 
We focus on the eigenvalues indexed by $J=\{15,16\}$ and their corresponding eigenfunctions. These two eigenvalues $\lambda_{15}$ and $\lambda_{16}$ are close to each other and form a representative eigenvalue cluster in the L-shaped domain. The reliability and efficiency of the error estimator $\eta_h$ are then tested using the D\"orfler marking strategy with marking parameter $\theta=0.5$.
\end{example}

\begin{figure}[H]
\centering
\begin{minipage}{0.41\textwidth}
    \centering
    \includegraphics[width=\textwidth]{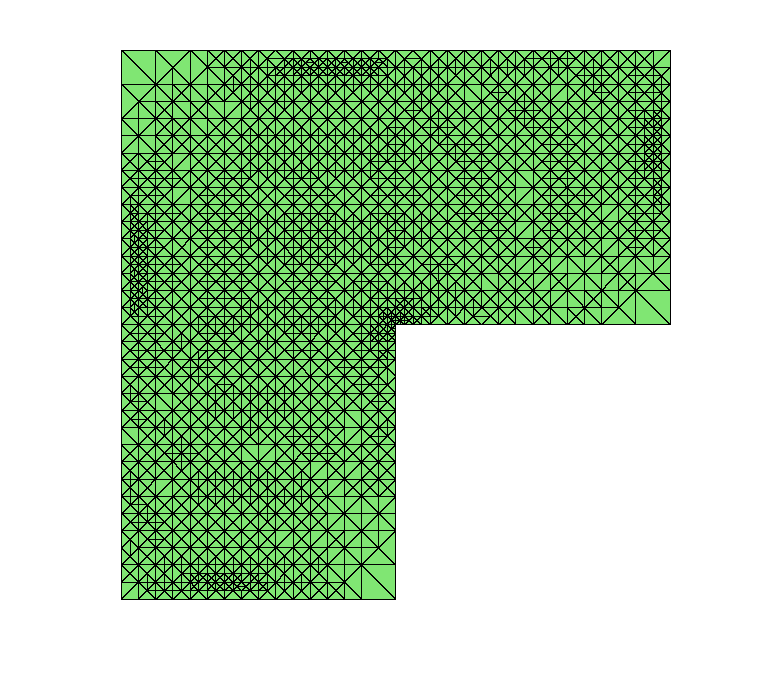}
    \caption{Adaptive mesh with 14879 degrees of freedom for the L-shaped domain.}\label{fig:lshapemesh}
\end{minipage}
\hfill
\begin{minipage}{0.45\textwidth}
    \centering
    \includegraphics[width=\textwidth]{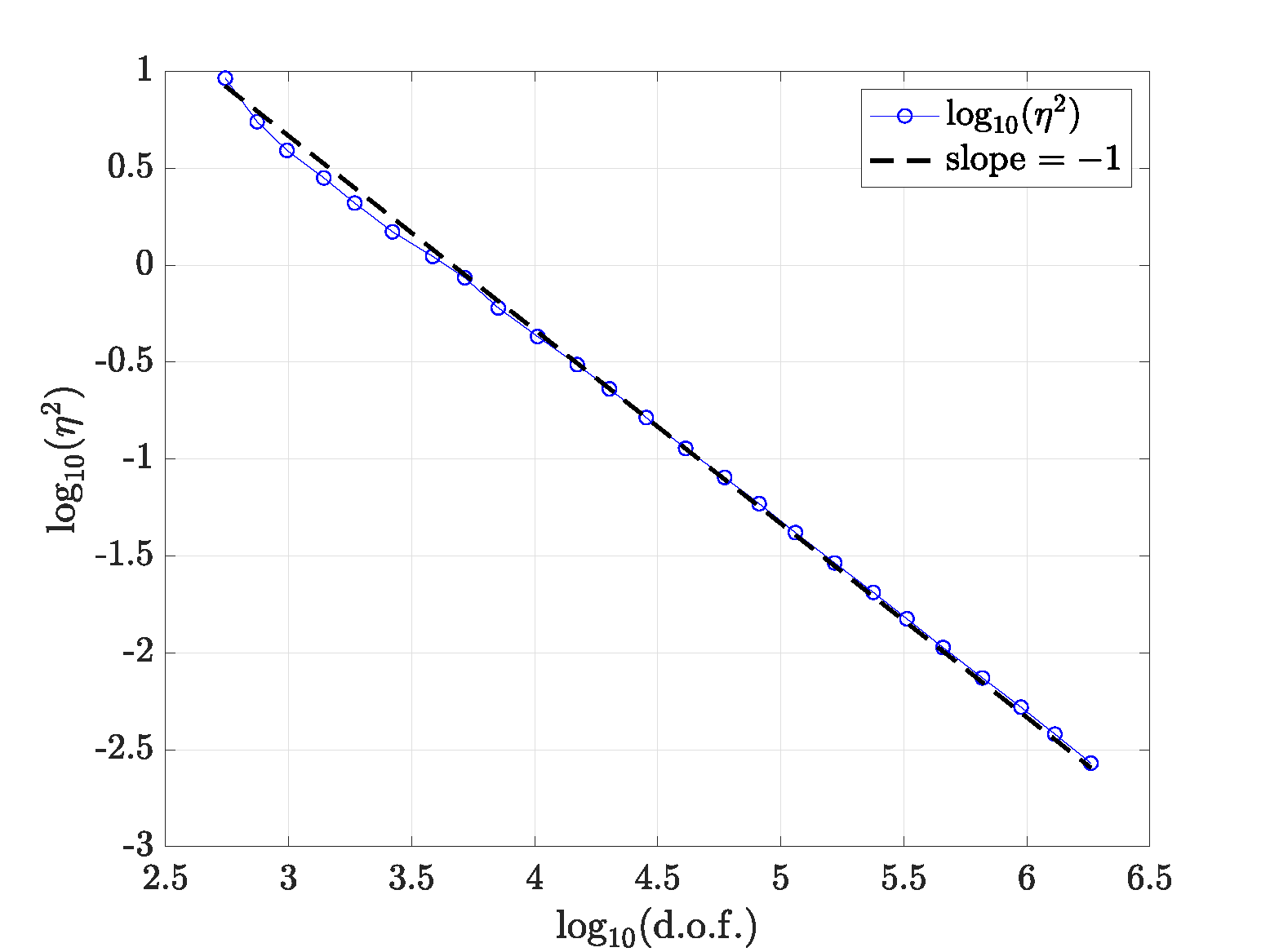}
    \caption{A posteriori estimator for $J = \{15,\ 16\}$.}\label{fig:lshapeeta}
\end{minipage}
\end{figure}

\par Figure \ref{fig:lshapemesh} displays the adaptive mesh with 14879 degrees of freedom, where the refinement is concentrated around the corner of the L-shaped domain, indicating that the proposed estimator effectively captures the singularity. In Figure \ref{fig:lshapeeta}, the reference line has slope $-1$. We observe that the curve of the squared estimator $\eta_h^2$ is parallel to the reference line, which verifies our theoretical findings, i.e., the reliability and efficiency of the estimator. Moreover, numerical results show that the estimator $\eta_h^2$ converges at the optimal rate $O( d.o.f. ^{-1})$.

\subsection{Perturbed symmetric crack domain}
\par In this subsection, we present numerical results for a perturbed symmetric crack domain. 
\begin{example}
We solve \eqref{eq:strong-eig} in a square with four perturbed symmetric slits defined by
\[
\Omega=(-1,1)^2 \backslash\binom{\operatorname{conv}\{(0.5005,0),(1,0)\} \cup \operatorname{conv}\{(0,0.501),(0,1)\}}{\cup \operatorname{conv}\{(-0.499,0),(-1,0)\} \cup \operatorname{conv}\{(0,-0.5),(0,-1)\}} .
\]
From the adaptive computation, it may be seen that $\lambda_3$ and $\lambda_4$ are close to each other. We therefore compute the eigenvalues indexed by $J=\{3,4\}$ and the corresponding eigenfunctions. The D\"orfler marking strategy is used with marking parameter $\theta=0.1$.
\end{example}

Figure \ref{fig:crackmesh} shows the adaptive mesh with 28453 degrees of freedom. We observe that the refinement is mainly concentrated around the crack, which indicates that the proposed estimator effectively captures the singularities of the eigenfunctions. Figure \ref{fig:cracketa} plots the squared estimator $\eta_h^2$, together with a reference line of slope $-1$. The curve of $\eta_h^2$ is parallel to the reference line, which is consistent with the theoretical results established in the previous sections. In addition, this behavior indicates the optimal convergence rate $O( d.o.f. ^{-1})$.

\begin{figure}[H]
\centering
\begin{minipage}{0.48\textwidth}
    \centering
    \includegraphics[width=\textwidth]{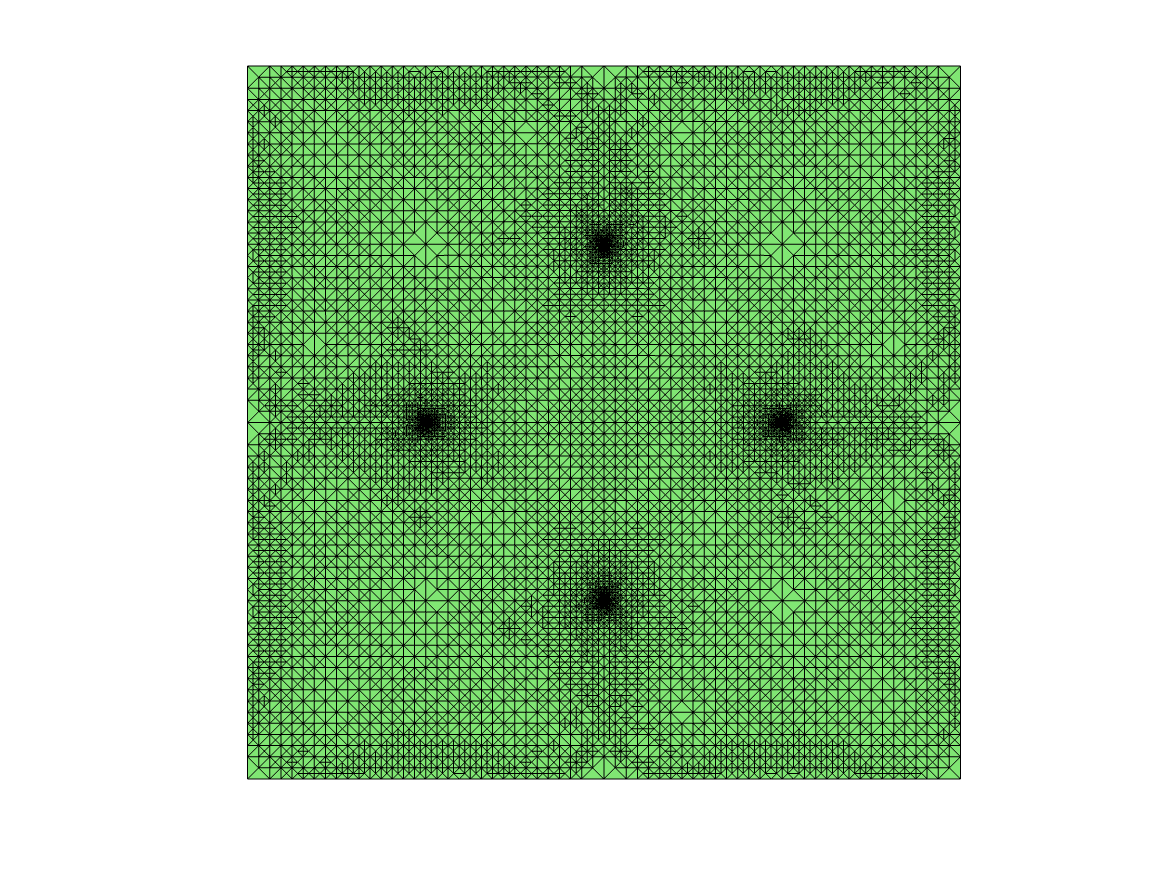}
    \caption{Adaptive mesh with 28453 degrees of freedom for the crack domain.}\label{fig:crackmesh}
\end{minipage}
\hfill
\begin{minipage}{0.45\textwidth}
    \centering
    \includegraphics[width=\textwidth]{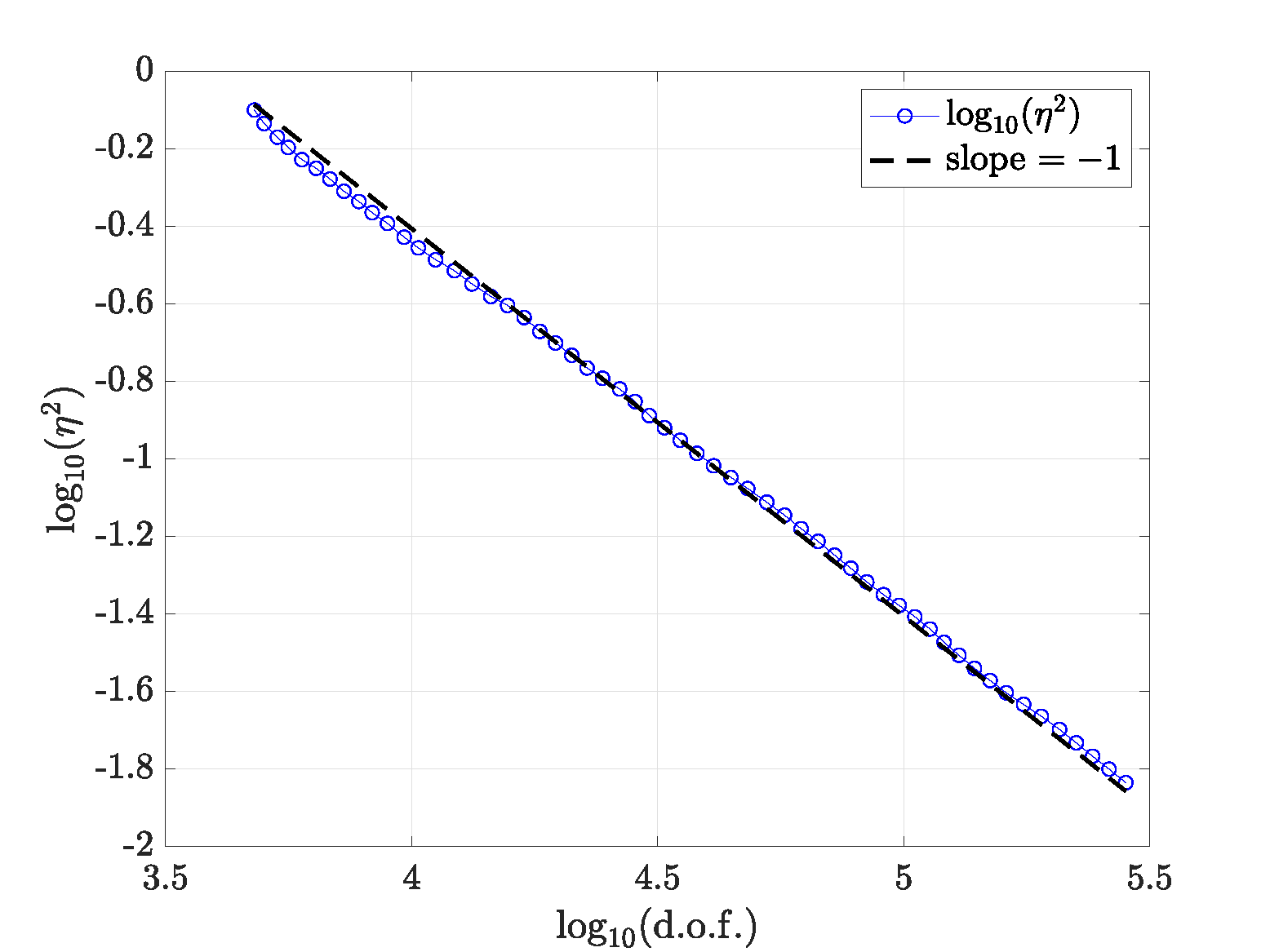}
    \caption{A posteriori estimator for $J = \{3,\ 4\}$.}\label{fig:cracketa}
\end{minipage}
\end{figure}

\begin{figure}[H]
\centering
\begin{minipage}{0.45\textwidth}
    \centering
    \includegraphics[width=\textwidth]{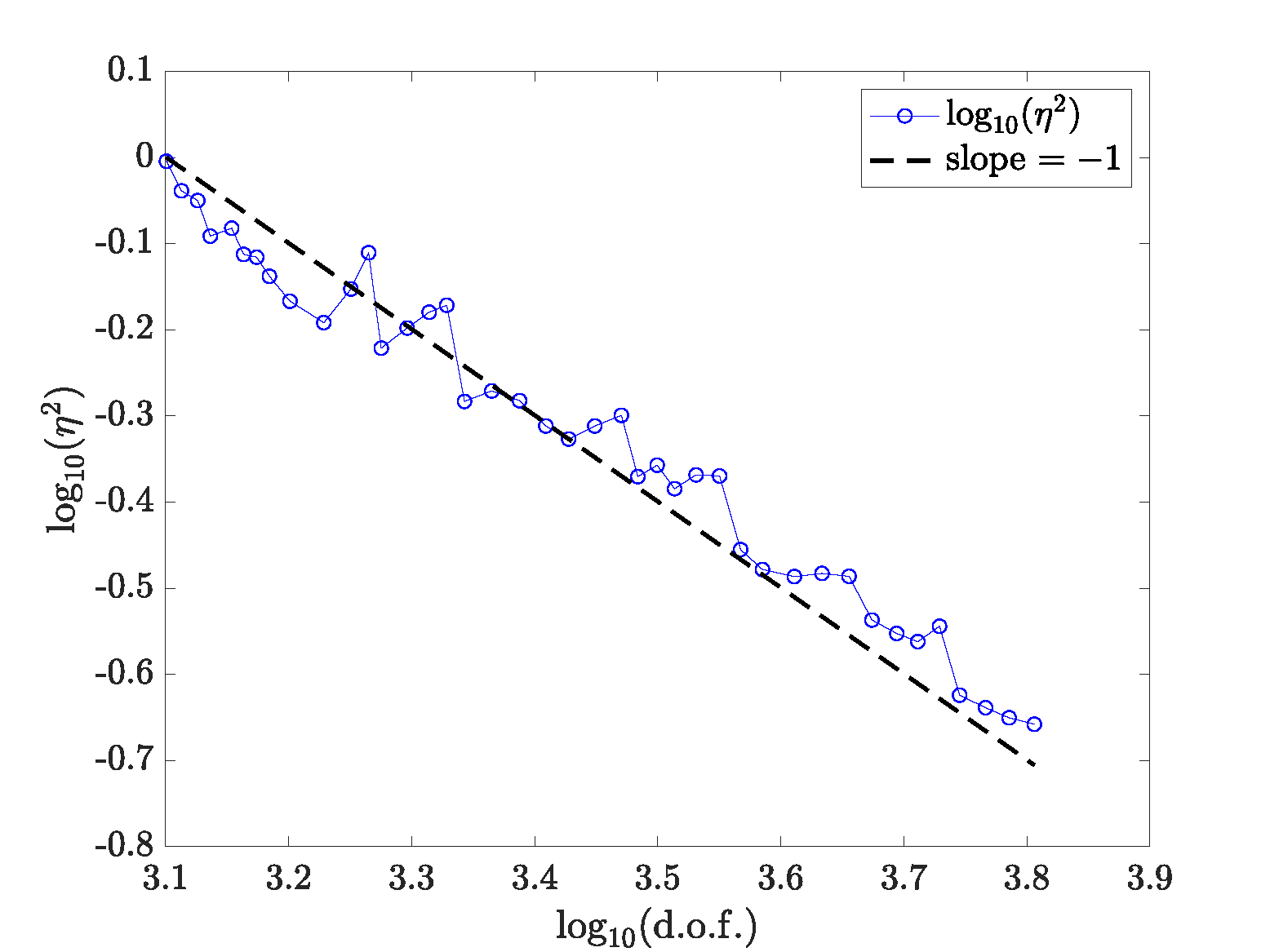}
    \caption{A posteriori estimator for $J = \{3\}$.}\label{fig:crack3}
\end{minipage}
 \hfill
\begin{minipage}{0.45\textwidth}
    \centering
    \includegraphics[width=\textwidth]{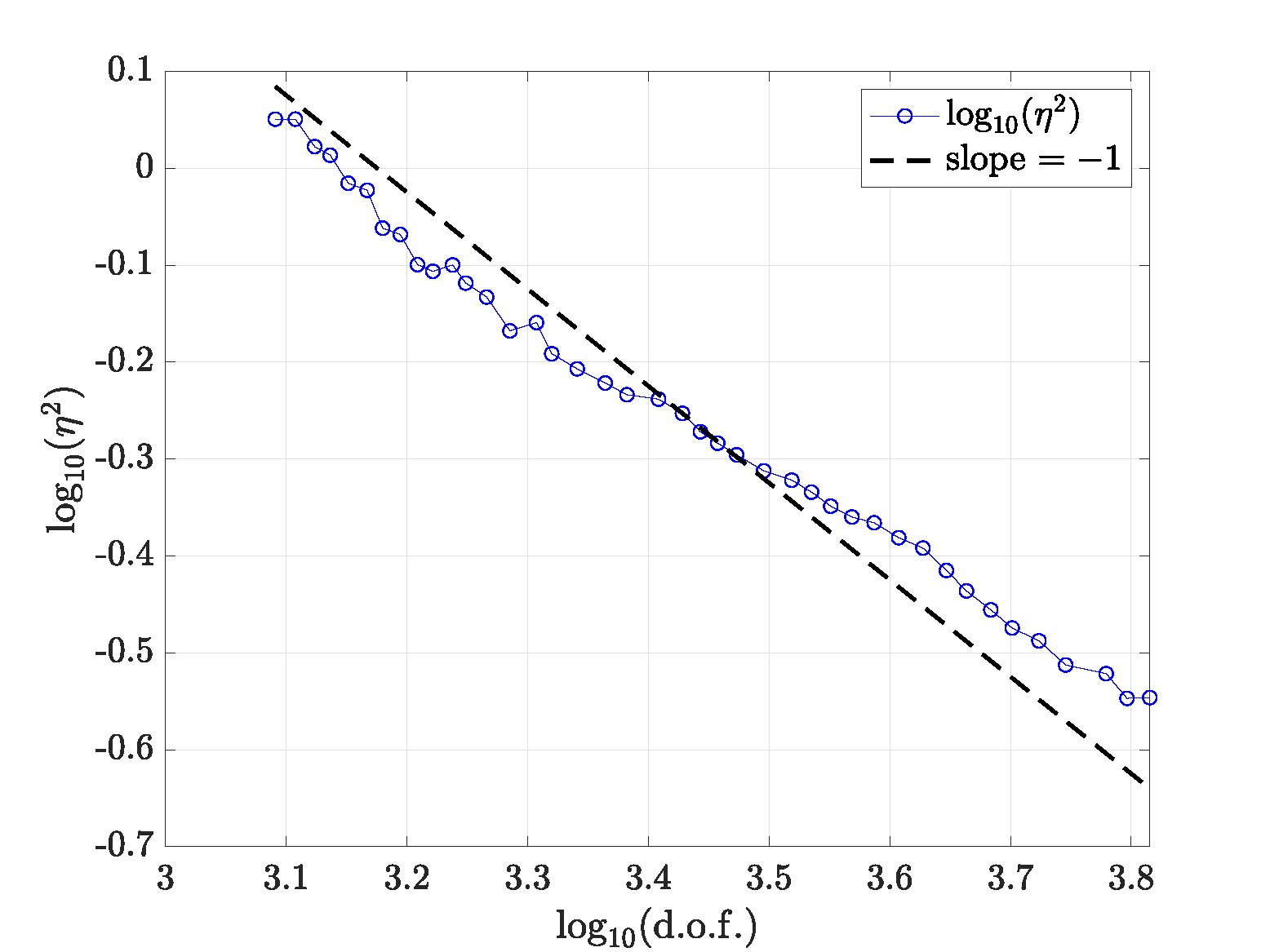}
    \caption{A posteriori estimator for $J = \{4\}$.}\label{fig:cracketa4}
\end{minipage}
\end{figure}

Furthermore, using the same initial mesh $\mathcal{T}_0$ and the same marking parameter $\theta=0.1$, we compare the estimator \eqref{compueta} corresponding to the index sets $J=\{3,4\}$, $\{3\}$, and $\{4\}$. In separate computations for $\{3\}$ and $\{4\}$, the estimators show unstable oscillations, as seen in Figures \ref{fig:crack3} and \ref{fig:cracketa4}. This suggests that the cluster estimator $\eta_{h,J}$ provides a more stable indicator for clustered eigenvalues.

\section{Conclusion}\label{sec:conclusion}

In this paper, we propose and analyze a residual-type a posteriori error estimator for the $C^0$ interior penalty Galerkin approximation of multiple and clustered eigenvalues of the biharmonic operator. The analysis is based on a cluster projection framework and a theoretical error estimator associated with the exact eigenspace. It is proven that the theoretical estimator is reliable and efficient in the mesh-dependent energy norm. Moreover, its equivalence with the computable estimator is established, which leads to the reliability and efficiency of the proposed estimator.

A key feature of the analysis is that the reliability and efficiency bounds are independent of the pairwise distances between eigenvalues within the cluster. This makes the estimator suitable for tightly clustered eigenvalues. Numerical experiments in the L-shaped domain and the crack domain verify the theoretical findings and show the stability of the proposed estimator.

\backmatter

\section*{Declarations}
\begin{itemize}
\item {\bf Funding} The second author is supported by National Natural Science Foundation of China (Grant No. 12401481).
\item {\bf Conflict of interest/Competing interests} There is no potential conflict of interest.
\item {\bf Ethics approval and consent to participate} This article does not contain any studies involving animals or human participants.
\item {\bf Consent for publication} Not applicable.
\item {\bf Data availability} This research does not use any external or author-collected data.
\item {\bf Materials availability} Not applicable.
\item {\bf Code availability} Not applicable.
\item {\bf Author contribution} Jianing Guo: theoretical derivation, numerical experiments, writing original draft preparation. Qigang Liang: theoretical derivation, conceptualization, methodology, supervision, writing—reviewing and final approval.
\end{itemize}

\bibliography{sn-bibliography}
\end{document}